\DeclareFontFamily{U}{ntxmia}{\skewchar \font =127}
 \DeclareFontShape{U}{ntxmia}{m}{it}{
                        <-> \ntxmath@scaled ntxmia
                      }{}    
                      \DeclareFontShape{U}{ntxmia}{b}{it}{
                        <-> \ntxmath@scaled ntxbmia
                      }{}
\def\NAT@spacechar{~}
\crefname{figure}{figure}{figures}
\crefname{claim}{Claim}{Claims}
\crefname{conjecture}{Conjecture}{Conjectures}
\crefname{figure}{Figure}{Figures}
\crefname{claim}{claim}{claims}
\crefname{conjecture}{conjecture}{conjectures}
\Crefname{figure}{Figure}{Figures}
\Crefname{claim}{Claim}{Claims}
\crefname{conjecture}{Conjecture}{Conjectures}
\newtheorem{theorem}{Theorem}[section]
\newtheorem{lemma}[theorem]{Lemma}
\newtheorem{corollary}[theorem]{Corollary}
\newtheorem{conjecture}[theorem]{Conjecture}
\theoremstyle{definition}
\newtheorem{definition}[theorem]{Definition}
\renewcommand{\binom}[2]{\ensuremath{\mleft(\kern-.1em\genfrac{}{}{0pt}{}{#1}{#2}\kern-.1em\mright)}}    
\newcommand{\inbinom}[2]{\ensuremath{\bigl(\kern-.1em\genfrac{}{}{0pt}{}{#1}{#2}\kern-.1em\bigr)}} 
\newcommand*\nume{\ensuremath{\mathrm{e}}}
\newcommand{\cC}{\mathcal{C}}
\newcommand{\cM}{\mathcal{M}}
\newcommand{\NN}{\mathbb{N}}
\newcommand{\PP}{\mathbb{P}}
\newcommand{\EE}{\mathbb{E}}
\newcommand{\Var}{\operatorname{Var}}
\def\moverlay{\mathpalette\mov@rlay}
\def\mov@rlay#1#2{\leavevmode\vtop{%
  \baselineskip\z@skip \lineskiplimit-\maxdimen
  \ialign{\hfil$\m@th#1##$\hfil\cr#2\crcr}}}
\newcommand{\charfusion}[3][\mathord]{
    #1{\ifx#1\mathop\vphantom{#2}\fi
        \mathpalette\mov@rlay{#2\cr#3}
      }
    \ifx#1\mathop\expandafter\displaylimits\fi}
\newcommand{\eps}{\epsilon}
\newcommand{\COMMENT}[1]{}
\newcommand{\COMNEW}[1]{}
\renewcommand{\COMNEW}[1]{\footnote{\textcolor{red!70!black}{#1}}} 
\title{How many random edges make an almost-Dirac graph Hamiltonian?}
\author[A.~Espuny D\'iaz]{Alberto Espuny D\'iaz}
\email{espuny-diaz@informatik.uni-heidelberg.de}
\address{Institut f\"ur Informatik, Universit\"at Heidelberg, 69120 Heidelberg, Germany.}
\author[R.~V.~Razafindravola]{Richarlotte Val\'er\`a Razafindravola}
\email{richarlotte.razafindravola@aims.ac.rw}
\thanks{A.~Espuny Díaz was funded by the Deutsche Forschungsgemeinschaft (DFG, German Research Foundation) through project no.\ 513704762. R.~V.~Razafindravola was funded by the DFG under Germany’s Excellence Strategy EXC-2181/1 -- 390900948 (the Heidelberg STRUCTURES Excellence Cluster).}
\date{\today}
\begin{document}
\begin{abstract}
We study Hamiltonicity in the union of an $n$-vertex graph $H$ with high minimum degree and a binomial random graph on the same vertex set.
In particular, we consider the case when $H$ has minimum degree close to $n/2$.
We determine the perturbed threshold for Hamiltonicity in this setting.

To be precise, let $\eta\coloneqq n/2-\delta(H)$.
For $\eta=\omega(1)$, we show that it suffices to add $\Theta(\eta)$ random edges to $H$ to a.a.s.\ obtain a Hamiltonian graph; for $\eta=\Theta(1)$, we show that $\omega(1)$ edges suffice.
In fact, when $\eta=o(n)$ and $\eta=\omega(1)$, we show that $(8+o(1))\eta$ random edges suffice, which is best possible up to the error term.
This determines the sharp perturbed threshold for Hamiltonicity in this range of degrees.

We also obtain analogous results for perfect matchings, showing that, in this range of degrees, the sharp perturbed thresholds for Hamiltonicity and for perfect matchings differ by a factor of $2$.
\end{abstract}


\maketitle

\section{Introduction}\label{sec:intro}

The study of random graphs focuses on understanding the (likely) properties of the ``average'' graph (on a given probability space).
In a similar way, the study of \emph{randomly perturbed graphs} can be seen as the study of the properties of the ``average'' \emph{supergraph} of a given graph $H$.
Since the seminal work of \citet{BFM03} on Hamiltonicity of randomly perturbed graphs, these have received much attention, especially during the last decade.
Most of this research has considered supergraphs of graphs with some minimum degree condition.

More precisely, the framework we consider is the following.
Given an (integer) function $d=d(n)$, we take some sequence of $n$-vertex graphs $\{H_n\}_{n\in\mathbb{N}}$ with minimum degree $\delta(H_n)\geq d$.
We also consider the \emph{binomial random graph} $G(n,p)$ (which is an $n$-vertex graph sampled by adding each of the $\inbinom{n}{2}$ possible edges independently with probability $p$) on the same vertex set as $H_n$.
We are interested in understanding the likely properties of $H_n\cup G(n,p)$ (we only consider \emph{simple} graphs).

Much of the research into random graphs has focused on understanding the thresholds for different properties.
Roughly speaking, a \emph{threshold} refers to a range of values for the parameter~$p$ where the random graph suddenly transitions from not satisfying a property to satisfying it (with high probability).
This notion extends to the study of randomly perturbed graphs in a natural way.
Formally, in this paper we consider the following definition.

\begin{definition}
Given a nontrivial monotone graph property $\mathcal{P}$ and a function $d=d(n)$, we say that a function $p^*=p^*(n)$ is a \emph{$d$-threshold} for~$\mathcal{P}$ if the following two statements hold:
\begin{enumerate}[label=$(\arabic*)$]\setcounter{enumi}{-1}
    \item\label{0statement1} There exists a graph sequence $\{H_n\}_{n\in\mathbb{N}}$ with $\delta(H_n)\geq d$ such that, if $p=o(p^*)$, then
    \[\lim_{n\to\infty}\mathbb{P}\left[H_n\cup G(n,p)\in\mathcal{P}\right]=0.\]
    \item\label{1statement1} For every graph sequence $\{H_n\}_{n\in\mathbb{N}}$ with $\delta(H_n)\geq d$, if $p=\omega(p^*)$, we have that
    \[\lim_{n\to\infty}\mathbb{P}\left[H_n\cup G(n,p)\in\mathcal{P}\right]=1.\]
\end{enumerate}
Moreover, we say that $p^*=p^*(n)$ is a \emph{sharp $d$-threshold} for $\mathcal{P}$ if, for every fixed $\eps>0$, the following two statements hold:
\begin{enumerate}[label=$(\arabic*)$]\setcounter{enumi}{-1}
    \item\label{0statement2} There exists a graph sequence $\{H_n\}_{n\in\mathbb{N}}$ with $\delta(H_n)\geq d$ such that, if $p\leq(1-\eps)p^*$, then
    \[\lim_{n\to\infty}\mathbb{P}\left[H_n\cup G(n,p)\in\mathcal{P}\right]=0.\]
    \item\label{1statement2} For every graph sequence $\{H_n\}_{n\in\mathbb{N}}$ with $\delta(H_n)\geq d$, if $p\geq(1+\eps)p^*$, we have that
    \[\lim_{n\to\infty}\mathbb{P}\left[H_n\cup G(n,p)\in\mathcal{P}\right]=1.\]
\end{enumerate}
\end{definition}

While the (sharp) $d$-thresholds for different properties are not unique, we will follow the custom of referring to one such $d$-threshold (usually, the one with the simplest expression) as \emph{the} (sharp) $d$-threshold for~$\mathcal{P}$.
Moreover, when every sequence of graphs $\{H_n\}_{n\in\mathbb{N}}$ with $\delta(H_n)\geq d$ satisfies that $H_n\in\mathcal{P}$ (for all sufficiently large $n$), we abuse the definition and say that the (sharp) $d$-threshold for~$\mathcal{P}$ is~$0$.
Naturally, the results about thresholds in binomial random graphs correspond to $0$-thresholds.
The definitions of $d$-thresholds extend to subsequences of $n$-vertex graphs in a natural way.
Moreover, while these definitions are built on graph sequences, in practice we will omit the sequences from our statements, and they will be implicit in the use of $n$-vertex graphs $H_n$.

\subsection{Hamiltonicity}

In this paper, we focus mainly on Hamiltonicity (that is, the property of containing a spanning cycle).
A classical theorem of \citet{Dirac52} ensures that, if $d\geq n/2$, then the (sharp) $d$-threshold for Hamiltonicity is $0$.
On the opposite extreme, \citet{Posa76} and \citet{Kor77} independently showed that the $0$-threshold for Hamiltonicity is $\log n/n$ (in fact, \citet{Kor77} showed that $\log n/n$ is the \emph{sharp} $0$-threshold for Hamiltonicity).
Lastly, for $d=\alpha n$ with $\alpha\in(0,1/2)$ fixed, \citet{BFM03} showed that the $d$-threshold for Hamiltonicity is $1/n$.
Together, these results give a complete ``macroscopic'' picture of the thresholds for Hamiltonicity: for every $\alpha\in[0,1)$, if we set $d=\alpha n$, the $d$-threshold for Hamiltonicity is known.
However, on this macroscopic picture we observe two jumps in the behaviour of the threshold as a function of~$\alpha$.
The first such jump occurs at $\alpha=0$, and the second, at $\alpha=1/2$.
For any property $\mathcal{P}$ and any $\alpha\in[0,1]$ where such a jump occurs, we shall refer to the range $d=(1\pm o(1))\alpha n$ as a \emph{critical window}.
(This should not be confused with the probabilistic critical windows, which refer to the range around the threshold for some property.)
One should expect that, when considering these critical windows more carefully, the $d$-threshold will interpolate between the two thresholds of the macroscopic picture, to some extent.

The behaviour of Hamiltonicity around the critical window when $\alpha=0$ (which corresponds to random perturbation of sparse graphs) is fairly well understood: indeed, already \citet{BFM03} showed that, for $1\leq d=o(n)$, the function $\log(n/d)/n$ is a $d$-threshold for Hamiltonicity, and \citet{HMMMS21} improved the implicit constant to show that $p=(6+o(1))\log(n/d)/n$ suffices for $H_n\cup G(n,p)$ to a.a.s.\ contain a Hamilton cycle.
(To simplify statements, we say that an event occurs \emph{asymptotically almost surely} (a.a.s.) if the probability that it does tends to $1$ as $n$ tends to infinity.)
However, the critical window around $\alpha=1/2$ has not been studied at all.
The main goal of this note is to provide the $d$-threshold for Hamiltonicity in this critical window, thus completing the picture of the $d$-thresholds for Hamiltonicity.

\begin{theorem}\label{thm:thres}
    Let $d=n/2-\eta$, where $1/2\leq\eta\leq n/64$.
    The $d$-threshold for Hamiltonicity is~$\eta/n^2$.
\end{theorem}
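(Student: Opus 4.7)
For the lower bound I would use the standard extremal construction: let $H_n$ be the complete bipartite graph $K_{A,B}$ with $|A|=\lceil n/2-\eta\rceil$ and $|B|=n-|A|$, so that $\delta(H_n)=|A|\geq n/2-\eta$ and, on an infinite subsequence of $n$, $|B|-|A|=\Omega(\eta)$. Given any Hamilton cycle $C$ in $H_n\cup G(n,p)$, a double-count of cycle-degrees on each part yields that $C$ contains exactly $|B|-|A|$ more internal $B$-edges than internal $A$-edges, so at least $|B|-|A|=\Omega(\eta)$ edges inside $B$; since $H_n$ is bipartite all of these must come from $G(n,p)$. As $\mathbb{E}[e(G(n,p)[B])]\leq p\binom{n}{2}=o(\eta)$ whenever $p=o(\eta/n^2)$, Markov's inequality shows that a.a.s.\ $G(n,p)[B]$ has too few edges to support Hamiltonicity of $H_n\cup G(n,p)$.

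\textbf{Upper bound.} For the upper bound fix $p=\omega(\eta/n^2)$ and any $H$ with $\delta(H)\geq n/2-\eta$; I would argue via a stability-type dichotomy. Call $H$ \emph{extremal} if it admits a partition $(A,B)$ with $|A|\leq|B|$, $|B|-|A|\leq 3\eta$, and few edges inside $A\cup B$; otherwise call it \emph{non-extremal}. In the non-extremal case, $H$ has good expansion (via a Chvátal--Erdős-type stability argument), and a standard Pósa rotation--extension driven by the $\Omega(pn^2)=\omega(\eta)$ available random edges should a.a.s.\ produce a Hamilton cycle, following the perturbed-graph template of \citet{BFM03}. In the extremal case, the random graph $G(n,p)$ a.a.s.\ contains a matching $M$ of size $|B|-|A|$ inside $B$ (since $\mathbb{E}[e(G(n,p)[B])]=\Theta(pn^2)=\omega(\eta)$); I would contract each edge of $M$ to a single super-vertex to form a nearly balanced auxiliary bipartite graph with $|A|$ vertices per side, and apply a dense-bipartite Hamilton cycle theorem (of Moon--Moser or Chvátal type) to lift a Hamilton cycle back to one in $H\cup M\subseteq H\cup G(n,p)$.

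\textbf{Main obstacle.} The hard part will be the extremal case, where $M$ must be chosen so that each resulting super-vertex has a sufficiently large common neighbourhood in $A$ through $H$ (that is, $|N_H(u)\cap N_H(v)|$ is large for each $uv\in M$), and so that the contracted bipartite graph is dense enough for the Hamilton cycle theorem to apply. Since the extremal case allows $H$ to be missing a small number of bipartite edges—precisely the source of local obstructions—one must select $M$ carefully: either greedily avoiding bad pairs, or by sampling $M$ from $G(n,p)[B]$ and using the slack of $\omega(\eta)$ versus $\Theta(\eta)$ random edges to discard problematic candidates. Any remaining random edges would then serve as an insurance pool for patching local obstructions, and making this robust across all extremal $H$ is the main technical step.
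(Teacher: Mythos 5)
Your lower bound is essentially the paper's own argument (complete bipartite $K_{d,n-d}$, at least $|B|-|A|$ random edges forced inside $B$, Markov), and it is fine. The upper bound, however, has a genuine gap, and it sits exactly at the quantitative heart of the theorem: nothing in your sketch explains why $\omega(\eta)$ random edges suffice rather than the $\Theta(n)$ edges used in the rotation--extension template of \citet{BFM03} that you invoke. A booster/rotation argument only closes the count if you know in advance that the number of augmentation steps is $O(\eta)$, i.e.\ that the deterministic graph already contains a path or cycle missing only $O(\eta)$ vertices; a priori each booster lengthens the longest path by one, so up to $\Theta(n)$ successes -- hence $\Theta(n)$ random edges -- could be needed, which is far more than $\omega(\eta)$ when $\eta=o(n)$. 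The paper gets this missing ingredient from Dirac's long-cycle theorem (\cref{DiracLemma}): once the graph is $2$-connected, $\delta\ge n/2-\eta$ forces a cycle of length at least $n-2\eta$, so only $O(\eta)$ augmentations are needed. Relatedly, your dichotomy is not a dichotomy: graphs such as two cliques of order about $n/2$ sharing at most one vertex (or disjoint) have minimum degree at least $n/2-1$ and are far from bipartite, hence ``non-extremal'' in your sense, yet they are disconnected or have a cutvertex, so the claimed ``non-extremal $\Rightarrow$ good expansion'' step and the ensuing P\'osa machinery fail as stated. Connectivity must itself be bought with random edges -- the paper uses \cref{BKFM}, which needs only $\omega(1)$ of them -- and your sketch never addresses it. Finally, your extremal branch is, as you acknowledge, incomplete (choosing $M$ so that contracted pairs have large common neighbourhoods, and handling the few vertices whose degree into $A$ is damaged by the internal edges allowed in the ``extremal'' definition), and the regime $\eta=O(1)$, where $1/2\le\eta$ is permitted, needs separate care in several of your $\Omega(\eta)$ and $3\eta$ statements.

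For comparison, the paper needs neither the stability dichotomy nor rotations: after adding $\omega(1)$ random edges to make the graph $2$-connected (\cref{BKFM}), it proves directly (\cref{RandomEdges}) that \emph{every} non-Hamiltonian $2$-connected graph with $\delta\ge n/2-\eta$ has at least $n^2/8-4n\eta$ non-edges each of whose addition lengthens a longest cycle; since by \cref{DiracLemma} that cycle misses at most $2\eta$ vertices, sprinkling uniformly random edges in rounds requires only a sum of at most $2\eta$ geometric random variables with constant success probability, so $O(\eta)$ random edges in expectation, and Markov yields \cref{2ConnectedGraph}\,\ref{2ConnectedGraphitem1}. If you want to salvage your route, note that the case analysis is unnecessary: minimum degree $n/2-\eta\ge 31n/64$ alone already gives the P\'osa expansion condition for linear-sized sets once connectivity is ensured; the steps you actually cannot skip are the connectivity repair and the long-cycle (or long-path) input bounding the number of augmentations by $O(\eta)$.
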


We remark that, since $d$ is an integer function, the function $\eta=\eta(n)$ must take integer values when $n$ is even and ``half-integer'' values when $n$ is odd.

Our proof of \cref{thm:thres} avoids the use of the rotation-extension technique.
In the range where $\eta$ is linear, this leads to a new proof of the main result of \citet{BFM03}, in addition to the subsequent proofs of \citet{KKS16} and \citet{HMMMS21}.
Our proof does not extend to all the dense range of degrees (the upper bound on $\eta$ in the statement is not best possible but, since we cannot hope to get close to $n/2$ with our approach, we have made no effort to optimise it).
However, for the range that we consider, our proof yields a better upper bound on the sharp $d$-threshold for Hamiltonicity than that in~\cite{HMMMS21}, and thus improves on all previously known bounds (see \cref{2ConnectedGraph}~\ref{2ConnectedGraphitem2}).
In fact, for a smaller range of the function $\eta$ (but covering essentially all the critical window), our technique provides us with the \emph{sharp} $d$-threshold for Hamiltonicity.

\begin{theorem}\label{thm:sharp}
    Let $d=n/2-\eta$, where $\eta=\omega(1)$ and $\eta=o(n)$.
    The sharp $d$-threshold for Hamiltonicity is~$16\eta/n^2$.
\end{theorem}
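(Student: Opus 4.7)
For the lower bound, the plan is to take $H_n = K_{A,B}$ to be the complete bipartite graph with $|A| = \lfloor n/2 + \eta\rfloor$ and $|B| = \lceil n/2 - \eta\rceil$, which satisfies $\delta(H_n) = |B| \geq n/2 - \eta$. In any Hamilton cycle $C$ of $H_n \cup G(n,p)$, double-counting the endpoints of edges of $C$ in each part gives $a - b = |A| - |B|$, where $a$ (resp.\ $b$) is the number of edges of $C$ inside $A$ (resp.\ $B$); as $H_n$ is bipartite and $b \geq 0$, we need $G(n,p)$ to contain at least $|A| - |B| \geq 2\eta - 1$ edges inside $A$. For $p = (1-\eps)\cdot 16\eta/n^2$ and $\eta = o(n)$, $\EE[|E(G(n,p)[A])|] = \binom{|A|}{2}p = (1-\eps)(1+o(1))\cdot 2\eta$, so a Chernoff bound combined with $\eta = \omega(1)$ shows that $G(n,p)$ has fewer than $2\eta - 1$ edges inside $A$ a.a.s., precluding Hamiltonicity.

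For the upper bound, fix $\eps > 0$ small and set $p = (1+\eps)\cdot 16\eta/n^2$. Given an arbitrary $H$ with $\delta(H) \geq n/2 - \eta$, the approach is a stability-type dichotomy: either $H$ is already Hamiltonian (and we are done), or $H$ is close to one of two extremal structures. In the ``near two-cliques'' case---$H$ has a small vertex-cut splitting it into dense parts, each essentially a clique on $\sim n/2$ vertices---the random graph contributes $\Theta(\eta) = \omega(1)$ edges crossing the cut, comfortably more than the constant number needed to splice Hamilton paths of the two sides into one cycle. The binding case is the ``near-bipartite'' one: there is a partition $V = A \cupdot B$ with $|A| \geq |B|$ and $|A| - |B| = 2\eta' \leq 2\eta$, such that $H$ is close to $K_{A,B}$ (few within-part edges and few missing bipartite edges).

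To handle the near-bipartite case, I would first extract a matching $M$ of size $2\eta'$ from $E(G(n,p)[A])$, then combine it with the bipartite edges of $H$ to build a Hamilton cycle. The expectation $\EE[|E(G(n,p)[A])|] = (1+\eps)(1+o(1))\cdot 2\eta \geq (1+\eps)\cdot 2\eta'$ together with Chernoff gives $|E(G(n,p)[A])| \geq (1+\eps/2)\cdot 2\eta'$ a.a.s.; because $\eta = o(n)$, the expected number of vertices of $A$ incident to two or more edges of $G(n,p)[A]$ is $O(\eta^2/n) = o(\eta)$, so a matching of size at least $2\eta'$ can be extracted a.a.s. Contracting each edge of $M$ to a super-vertex reduces the problem to finding a Hamilton cycle in the bipartite graph between $A' := (A\setminus V(M))\cup M$ and $B$: this is balanced ($|A'| = |B| = n/2 - \eta'$) and inherits from $H$ a bipartite minimum degree of $n/2 - O(\eta) \gg |A'|/2$, so a bipartite Dirac-type theorem delivers the Hamilton cycle, which uncontracts to a Hamilton cycle of $H \cup G(n,p)$.

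The hardest part will be the final step of the near-bipartite case: verifying that the contracted bipartite graph really is Hamiltonian despite $H$ possibly missing some bipartite edges and containing some within-part edges (the stability lemma only guarantees closeness to $K_{A,B}$, not equality). I expect these defects can be absorbed using the $\Theta(\eta)$ bipartite edges of $G(n,p)$ as a reservoir for patching low-degree vertices, or via a direct analysis of the contracted graph's degree sequence; extra care will also be needed when $\eta'$ is substantially smaller than $\eta$, to avoid losing the tight constant $16$.
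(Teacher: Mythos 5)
Your \ref{0statement2}-statement is fine and is essentially the paper's own argument (the paper counts consecutive pairs of big-side vertices along the cycle; your edge-count identity is the same bound). The \ref{1statement2}-statement, however, has a genuine gap, and it sits exactly where you flagged ``the hardest part''. First, the stability dichotomy you start from is asserted, not proved, and the edit-distance notion of ``close to $K_{A,B}$'' that such lemmas provide is far too coarse for a sharp constant when $\eta$ may grow as slowly as, say, $\log\log n$. Second, your claim that the near-bipartite partition satisfies $|A|-|B|=2\eta'\leq 2\eta$ is unjustified and can fail: take $|A|=n/2+\eta+t$, $|B|=n/2-\eta-t$, all $A$--$B$ edges present, and $t$ ``hub'' vertices of $A$ joined to all of $A$. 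Then $\delta(H_n)=n/2-\eta$, the graph is non-Hamiltonian (a Hamilton cycle needs a linear forest with $|A|-|B|=2\eta+2t$ edges inside $A$, but every within-$A$ edge meets a hub, so no linear forest inside $A$ has more than $2t$ edges), and it is within $O(tn)=o(n^2)$ edges of $K_{A,B}$; yet for $t\geq\eps\eta$ your scheme would need a matching of size $2\eta+2t$ inside $A$ drawn from only $(1+\eps+o(1))\cdot 2\eta$ random edges, which is impossible. A correct argument here must re-use the within-$A$ edges of $H_n$ itself, which your contract-a-random-matching step never does. Third, the proposed repair mechanism---patching low-degree vertices of the contracted bipartite graph with random edges---cannot work at this density: with $p=\Theta(\eta/n^2)$ and $\eta=o(n)$ we have $np=o(1)$, so any \emph{fixed} defective vertex a.a.s.\ receives no random edge at all; defects must be absorbed using $H_n$, which requires per-vertex structural control that no edit-distance stability statement supplies. (A smaller technical point: after contracting an edge $uv$ of the matching, the super-vertex must be made adjacent to $N_B(u)\cap N_B(v)$, not $N_B(u)\cup N_B(v)$, for the Hamilton cycle to uncontract.)

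For comparison, the paper avoids any structural dichotomy. It first sprinkles $\omega(1)$ random edges to make the graph $2$-connected (a result of Bohman, Frieze, Krivelevich and Martin), so that by Dirac's theorem the longest cycle already misses at most $2\eta$ vertices. The key lemma is a ``booster'' statement: every $2$-connected non-Hamiltonian graph with $\delta\geq n/2-\eta$ has at least $n^2/8-4n\eta$ non-edges whose addition creates a longer cycle. One then adds uniformly random edges in rounds; each round succeeds with probability at least $1/4-8\eta/n$, at most $2\eta$ rounds are needed, and a Chebyshev bound on the sum of these geometric variables shows that $(8+o(1))\eta$ random edges, i.e.\ $p=(16+o(1))\eta/n^2$, suffice a.a.s. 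If you want to rescue your route, you would need a structural lemma tailored to this problem that controls the within-part structure of $H_n$ at the scale $\Theta(\eta)$ and per vertex; the booster/sprinkling argument sidesteps all of this.
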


This is the first (non-trivial) result about sharp $d$-thresholds with $d=\omega(1)$ in the literature.
We remark that, when $\eta=\Theta(1)$, there is no sharp $d$-threshold for Hamiltonicity, so the lower bound on $\eta$ in \cref{thm:sharp} is necessary.
The extremal graph witnessing the \ref{0statement2}-statement is the standard one for the Hamiltonicity problem: a complete bipartite graph $H_n$ with parts of size $d$ and $n-d$, respectively; see \cref{sec:proofs} for more details.

As a corollary, we also obtain the sharp $d$-threshold for pancyclicity (that is, the property of containing a cycle of every length between $3$ and $n$) for a slightly more restricted range of~$d$.
This property has been considered in randomly perturbed graphs by \citet{KKS16} as well as \citet{AE22}.
The following is an immediate consequence of \cref{thm:sharp} and~\cite[Theorem~4]{AE22}.

\begin{corollary}\label{coro:pan}
    Let $d=n/2-\eta$, where $\eta=\omega(\log n)$ and $\eta=o(n)$.
    The sharp $d$-threshold for pancyclicity is~$16\eta/n^2$.\COMMENT{Proof: The \ref{0statement2}-statement is trivial, as it is also a bound for Hamiltonicity, so we focus on the \ref{1statement2}-statement. Let $H_n$ be an arbitrary $n$-vertex graph with $\delta(H_n)\geq d$. By \cref{thm:sharp}, we may choose $p_1=(16+o(1))\eta/n^2$ such that a.a.s.\ $H_n\cup G(n,p_1)$ is Hamiltonian. Conditional on this, by \cite[Theorem~4]{AE22}, we may choose $p_2=\Theta(\lg n/n^2)$ such that a.a.s.\ $H_n\cup G(n,p_1)\cup G(n,p_2)$ is pancyclic. By the choice of $\eta$, we have that $p_1+p_2=(16+o(1))\eta/n^2$.}
\end{corollary}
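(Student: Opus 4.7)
The plan is to deduce \cref{coro:pan} from \cref{thm:sharp} via a two-round exposure, combined with an auxiliary pancyclicity result of \citet{AE22}. For the \ref{0statement2}-statement, pancyclicity implies Hamiltonicity, so the extremal construction witnessing the lower bound in \cref{thm:sharp} (a complete bipartite graph with parts of size $d$ and $n-d$) already shows that no $p\leq(1-\eps)\,16\eta/n^2$ can make $H_n\cup G(n,p)$ pancyclic a.a.s.

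For the \ref{1statement2}-statement, fix $\eps>0$ and let $H_n$ be an arbitrary $n$-vertex graph with $\delta(H_n)\geq d$. Set $p_1\coloneqq(16+\eps/2)\eta/n^2$ and $p_2\coloneqq C\log n/n^2$ for a sufficiently large constant $C=C(\eps)$. Use the standard coupling which realises $G(n,p)$ as $G(n,p_1)\cup G(n,p_2)$ (with independent rounds) for suitable $p\leq p_1+p_2$. By \cref{thm:sharp} applied to the first round, a.a.s.\ $H_n\cup G(n,p_1)$ is Hamiltonian. Conditional on this event, \cite[Theorem~4]{AE22}, applied to the graph $H_n\cup G(n,p_1)$ (whose minimum degree is at least $d$ by monotonicity), ensures that adding the second round yields a pancyclic graph a.a.s. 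A union bound over the two a.a.s.\ events then gives pancyclicity of $H_n\cup G(n,p)$.

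Finally, since $\eta=\omega(\log n)$ we have $p_2=o(\eta/n^2)$, so $p_1+p_2\leq(1+\eps)\,16\eta/n^2$ for all sufficiently large $n$, which matches the claimed sharp $d$-threshold. No step should present a serious obstacle: the main care is simply checking that \cite[Theorem~4]{AE22} may be invoked on $H_n\cup G(n,p_1)$ rather than on $H_n$ alone, but this is immediate from the monotonicity of the minimum-degree hypothesis.
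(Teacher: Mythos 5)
Your proposal is correct and follows essentially the same route as the paper's own argument: the lower bound is inherited from the Hamiltonicity extremal example, and the upper bound uses a two-round exposure where \cref{thm:sharp} provides Hamiltonicity after the first round and \cite[Theorem~4]{AE22} upgrades it to pancyclicity with an extra $\Theta(\log n/n^2)$, absorbed by the hypothesis $\eta=\omega(\log n)$. No further comments are needed.
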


\subsection{Perfect matchings}

Let us now assume that $n$ is an even integer (and so all the sequences of graphs are restricted to even values of $n$).
The property of containing a perfect matching (that is, a set of $n/2$ pairwise disjoint edges) is one of the most studied in graph theory.
Since a Hamilton cycle contains a perfect matching, Dirac's theorem~\cite{Dirac52} ensures that, for $d\geq n/2$, the (sharp) $d$-threshold for containing a perfect matching is $0$. 
A classical result of \citet{ER66} shows that the sharp $0$-threshold for perfect matchings is $\log n/n$.
Note that these coincide with the respective sharp $d$-thresholds for Hamiltonicity (in the case of the $0$-threshold, a difference can be seen by analysing smaller order terms).
For randomly perturbed graphs with $1\leq d<n/2$, the results of \citet{BFM03}, together with \cref{thm:thres} and the standard extremal graph, immediately imply that the $d$-threshold for perfect matchings also coincides with the $d$-threshold for Hamiltonicity.
However, it turns out that the \emph{sharp} $d$-threshold for perfect matchings does \emph{not} coincide with that for Hamiltonicity.
Our techniques allow us to determine this sharp threshold too, in the same range of $d$ as for Hamiltonicity.

\begin{theorem}\label{thm:PM}
    Let $d=n/2-\eta$, where $\eta=\omega(1)$ and $\eta=o(n)$.
    The sharp $d$-threshold for containing a perfect matching is $8\eta/n^2$.
\end{theorem}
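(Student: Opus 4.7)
The plan is to follow the two-step template of \cref{thm:thres,thm:sharp}, exploiting that a perfect matching in the extremal bipartite graph $K_{d,n-d}$ needs only $\eta$ random edges inside the larger part, as opposed to $2\eta$ edges required by a Hamilton cycle; this halves the sharp constant from $16$ to $8$.

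For the \ref{0statement2}-statement, take $H_n\coloneqq K_{d,n-d}$ with parts $A$ and $B$ of sizes $d$ and $n-d$, so $\delta(H_n)=d$. Since $H_n$ has no edges inside $A$ or inside $B$, counting the edges of any perfect matching $M$ by type forces $M$ to contain at least $(|B|-|A|)/2=\eta$ edges of $G(n,p)$ inside $B$. For $p\leq(1-\eps)\cdot 8\eta/n^2$, the number $X$ of $G(n,p)$-edges inside $B$ has $\Exp[X]=\binom{|B|}{2}p=(1-\eps+o(1))\eta$, so Chebyshev's inequality (using $\eta=\omega(1)$) gives $\Pr[X\geq\eta]\to 0$, and no perfect matching exists a.a.s.

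For the \ref{1statement2}-statement, fix $\eps>0$, set $p=(1+\eps)\cdot 8\eta/n^2$, and let $H_n$ be any $n$-vertex graph with $\delta(H_n)\geq n/2-\eta$. Split $G(n,p)=G_1\cup G_2$ with independent $G_1\sim G(n,p_1)$ and $G_2\sim G(n,p_2)$, where $p_1=(1+\eps/2)\cdot 8\eta/n^2$ and $p_2=\Theta(\eta/n^2)$, and apply a structural dichotomy analogous to the one used for \cref{thm:sharp}. Either $H_n$ is close to $K_{d,n-d}$, i.e.\ it admits a partition $V=A'\cup B'$ with $|A'|$ close to $d$ and few $H_n$-edges inside $B'$ (the \emph{structured} case); or $H_n$ already has matching number at least $n/2-o(\eta)$, in which case short augmenting paths in $G_1$ close the small deficit to yield a perfect matching (the \emph{unstructured} case). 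In the structured case, the number of $G_1$-edges inside $B'$ concentrates around $(1+\eps/2+o(1))\eta$ by Chebyshev, and since the expected $G_1|_{B'}$-degree of a vertex is $O(\eta/n)=o(1)$, the number of vertices of degree at least $2$ is $o(\eta)$ a.a.s.; extracting a matching $M$ of size exactly $\eta$ inside $B'$ is then immediate. Completing $M$ to a perfect matching of $H_n\cup G(n,p)$ reduces to finding a perfect matching in the near-complete bipartite graph $H_n[A',B'\setminus V(M)]\cup G_2$, which Hall's theorem delivers given that $H_n$ is close to $K_{d,n-d}$ and that $G_2$ can patch any low-degree vertices.

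The principal obstacle lies in the structured case: $M$ must be chosen so that the $\eta$ unmatched vertices of $B'$ do not block Hall's condition for the leftover bipartite graph. This calls for coordinating $M$ with the (few) ``exceptional'' vertices of $A'$---those with atypically few $H_n$-neighbours in $B'$---and using the sprinkle $G_2$ to fix any remaining Hall obstructions. The second-moment concentration is tight at the threshold, which is precisely why the constant $8$ cannot be lowered.
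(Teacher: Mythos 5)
Your \ref{0statement2}-statement is correct and essentially identical to the paper's (same extremal graph $K_{d,n-d}$; using Chebyshev instead of Chernoff is immaterial). The \ref{1statement2}-statement, however, has a genuine gap. First, the ``structural dichotomy analogous to the one used for \cref{thm:sharp}'' does not exist in the paper: \cref{thm:sharp} is proved by an iterative booster-and-sprinkle argument (\cref{2ConnectedGraph}), not by a structured/unstructured case split, so the dichotomy you invoke is an unproved assertion. Second, and more importantly, even granting such a dichotomy, the structured-case completion fails as described. Having minimum degree $n/2-\eta$ and matching deficiency $\Theta(\eta)$ does not make $H_n[A',B']$ nearly complete from the $A'$ side, nor does it guarantee that the leftover bipartite graph is the right object to match. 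Concretely (rounding aside), let $A'$ be a clique on $n/2-\eta/2$ vertices, let $B'$ be an independent set of size $n/2+\eta/2$, and join every vertex of $B'$ to a fixed set $S_0\subseteq A'$ of size $n/2-\eta$. Then $\delta(H_n)\geq n/2-\eta$ and the matching number is $n/2-\Theta(\eta)$, so this graph lands in your structured case; yet the $\eta/2$ vertices of $A'\setminus S_0$ have no $H_n$-neighbour in $B'$, and every perfect matching of the perturbed graph must use edges inside $A'$, which your reduction to a bipartite Hall argument between $A'$ and $B'\setminus V(M)$ excludes. The sprinkle $G_2$ cannot repair this: with $p_2=\Theta(\eta/n^2)$, a prescribed vertex receives a $G_2$-edge into $B'$ with probability $\Theta(\eta/n)=o(1)$, so a.a.s.\ almost all of the $\Theta(\eta)$ problematic vertices receive none. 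You yourself name this coordination issue as ``the principal obstacle'' but do not resolve it; the unstructured case (``short augmenting paths in $G_1$ close the small deficit'') is likewise only asserted, not proved.

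For comparison, the paper's proof of the upper bound (\cref{PMatching}) needs no stability analysis: it first adds $\omega(1)$ random edges so that, by \cref{BKFM}, the graph is a.a.s.\ $2$-connected, whence \cref{DiracLemma} gives a matching of size at least $n/2-\eta$; then \cref{MAtchingEdges} shows that, as long as the current graph has no perfect matching, there are at least $n^2/8-4\eta n$ ``booster'' non-edges whose addition augments the largest matching (via forbidden augmenting paths of length $5$). Sprinkling uniformly random edges, at most $\eta$ augmentation rounds are needed, each governed by a geometric random variable with success probability at least $1/4-o(1)$, so by Chebyshev a.a.s.\ $m\leq 4\eta+o(\eta)$ random edges suffice, which translates into $p=(1+o(1))\,8\eta/n^2$ via the $G_{n,m}$-to-$G(n,p)$ coupling. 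Any repair of your route would need a proof of the dichotomy (say via Gallai--Edmonds) together with a completion step that allows matching edges inside $A'$ and handles vertices of $A'$ with few or no neighbours in $B'$ --- or simply the paper's booster argument, which sidesteps all of this.
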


The extremal graph witnessing the \ref{0statement2}-statement for this sharp threshold is the same standard extremal example for this problem: a complete bipartite graph $H_n$ with parts of size $d$ and $n-d$, respectively (see \cref{sec:proofs} for the details).
Our proof of \cref{thm:PM} also works when $n$ is odd to show that the sharp $d$-threshold for containing a matching of size $\lfloor n/2\rfloor$ is $8\eta/n^2$.

\subsection{Related work}

Hamiltonicity is likely the property that has been studied the most in randomly perturbed graphs.
In addition to the papers we already mentioned (for graphs), it has also been studied in directed graphs~\cite{BFM03,KKS16,ABKPT24}, hypergraphs~\cite{KKS16,MM18,HZ20} or subgraphs of the hypercube~\cite{CEGKO20}. 
On graphs, Hamiltonicity has also been considered when the perturbation comes from a random regular graph~\cite{EG23} or a random geometric graph~\cite{E23,EH24}.
A variant of the problem considers random colourings of randomly perturbed graphs and aims to find rainbow Hamilton cycles~\cite{AF19,AH21,KLS23}.
The Hamiltonicity Maker-Breaker~\cite{CHMP21a} and Waiter-Client~\cite{CHMP21b} games played on randomly perturbed graphs have been considered as well.

In addition to Hamiltonicity, multiple other spanning properties have been studied in the context of randomly perturbed graphs.
For instance, the full ``macroscopic'' behaviour of the $d$-thresholds is known for connectivity~\cite{BFKM04}, spanning bounded degree trees~\cite{KKS17,BHKMPP19} and (almost) unbounded degree trees~\cite{JK19} (for which, analogously to Hamiltonicity, there are two critical windows around $\alpha=0$ and $\alpha=1/2$); triangle-factors~\cite{BTW19,HMT21,BPSS23} and $2$-universality~\cite{Parc20,BPSS24} (for which there are three critical windows, around $\alpha=0$, $\alpha=1/3$ and $\alpha=2/3$); $C_\ell$-factors~\cite{BTW19,BPSS21} (for which there also are three critical windows, which depend on the value of $\ell$), and squares of Hamilton cycles~\cite{DRRS20,BPSS24} (for which there is an infinite number of critical windows, one at $\alpha=2/3$ and one around $\alpha=1/k$ for each integer $k\geq3$).
For $K_r$-factors with $r\geq4$, the ``macroscopic'' $d$-threshold is known for all but finitely many points~\cite{BTW19,HMT21,AKR24}, which in particular shows that there are $r$ critical windows (at $\alpha=k/r$ for all $k\in\{0,\ldots,r-1\}$).
We believe that studying the $d$-thresholds for these properties in the respective critical windows would be a very interesting problem.
Additionally, for some properties, the $d$-thresholds are known for some range of the values of $d$.
For example, there are results about different $F$-factors~\cite{BTW19} and higher powers of Hamilton cycles~\cite{BMPP20,DRRS20,ADRRS21,NT21,ADR23,NP24}, as well as some results for general bounded degree spanning graphs~\cite{BMPP20}.

\section{Proofs}\label{sec:proofs}

We begin this section by proving the \ref{0statement1}-statement for our different results.
As already mentioned in the introduction, these follow from a construction that is standard in the area; we include the details for the benefit of the unfamiliar reader.
Let $d=n/2-\eta$, where $1/2\leq\eta=\eta(n)\leq n/64$.
Let $H_n$ be a complete bipartite graph with parts~$A$ and $B$ of size $d$ and $n-d$, respectively.

Suppose first that we wish to obtain a graph which contains a matching of size $\lfloor n/2\rfloor$ on the vertex set $A\cup B$.
As $|A|<|B|$, any such matching must have at least $\lfloor n/2\rfloor-|A|\geq\eta-1/2$ edges contained in $B$.
The expected number of edges in $B$ in the random graph $G(n,p)$ is $\EE[|E(G(n,p)[B])|]=\inbinom{n/2+\eta}{2}p=\Theta(n^2p)$.
Thus, by Markov's inequality, if $p=o(\eta/n^2)$, then a.a.s.\ $G(n,p)[B]$ contains $o(\eta)$ edges.
Since any Hamilton cycle contains a matching of size $\lfloor n/2\rfloor$, this completes the proof of the \ref{0statement1}-statement for \cref{thm:thres}.
(There is in fact one missing case, when $n$ is odd and $\eta=1/2$; in this case, clearly any Hamilton cycle must contain at least one edge in $B$, and this will not occur if $p=o(n^{-2})$, again by Markov's inequality.)

Suppose now that $\eta=\omega(1)$ and $\eta=o(n)$.
Then, $\EE[|E(G(n,p)[B])|]=\inbinom{n/2+\eta}{2}p=(1+o(1))n^2p/8$.
Since $|E(G(n,p)[B])|$ is a binomial random variable, for any fixed $\eps>0$, if $p\leq(1-\eps)8\eta/n^2$, it follows by Chernoff's inequality\COMMENT{$\PP[X\ge (1+\gamma)\EE[X]]=\PP[X\ge2\eta(1-\epsilon^2)]\le 2\exp({-\frac{\epsilon^2}{3}(1-\epsilon)\eta})\underset{n\rightarrow \infty}{\rightarrow 0}$. It is also good to note that  $\PP[X\ge 2\eta]\le\PP[X\ge 2\eta(1-\epsilon^2)] $.} that a.a.s.\ $|E(G(n,p)[B])|\leq(1-\eps/2)\eta$, and so $H_n\cup G(n,p)$ does not contain a matching of size $\lfloor n/2\rfloor$. 
This completes the \ref{0statement2}-statement for \cref{thm:PM}.

Lastly, consider any cycle on vertex set $A\cup B$.
Each such cycle can be mapped to a (cyclic) word consisting of $n$ symbols, each being an $A$ or a $B$ depending on which set each vertex of the cycle belongs to.
Due to the difference in sizes of the sets, each such word must have at least $|B|-|A|=2\eta$ pairs of consecutive $B$'s.
In other words, in order for $H_n\cup G(n,p)$ to contain a Hamilton cycle, a necessary condition is that $G(n,p)[B]$ must contain at least $2\eta$ edges.
Arguing like above with Chernoff's inequality, we conclude that a.a.s.\ this does not hold if $p\leq(1-\eps)16\eta/n^2$, thus completing the \ref{0statement2}-statement for \cref{thm:sharp}.

From now on, we focus on the proofs of the \ref{1statement1}-statements.
For technical reasons, it will be convenient to step away from the binomial random graph model and instead consider uniform random graphs.
Given an integer $m\in[\inbinom{n}{2}]$, the random graph $G_{n,m}$ is an $n$-vertex graph with exactly $m$ edges chosen uniformly at random among all such graphs.
A well-known coupling argument allows us to work with this model and transfer the results we obtain to the binomial random graph model.
Indeed, assuming $m=\omega(1)$, there exist $p=(1+o(1))m/\inbinom{n}{2}$ and a coupling $(G_1,G_2)$ such that $G_1\sim G_{n,m}$, $G_2\sim G(n,p)$ and a.a.s.\ $G_1\subseteq G_2$.\COMMENT{We obtain the coupling as follows.
First, we reveal $G_2\sim G(n,p)$.
Now, if $e(G_2)<m$, we say the coupling fails and simply sample $G_1\sim G_{n,m}$.
Otherwise, we obtain $G_1$ by randomly sampling a set of exactly $m$ edges from $G_2$. 
First, it is easy to see that this results in $G_1\sim G_{n,m}$: this holds by definition if $e(G_2)<m$, and otherwise, for each set $E$ of exactly $m$ edges of $K_n$, the probability that precisely these are chosen for $G_1$ is
\begin{align*}
    \mathbb{P}[E(G_1)=E]&=\mathbb{P}[E\subseteq E(G_2)]\mathbb{P}[E(G_1)=E\mid E\subseteq E(G_2)]\\
    &=p^m\sum_{i=m}^{\binom{n}{2}}\mathbb{P}[E(G_1)=E\mid E\subseteq E(G_2),|E(G_2)|=i]\mathbb{P}[|E(G_2)|=i\mid E\subseteq E(G_2)]\\
    &=p^m\sum_{i=m}^{\binom{n}{2}}\frac{1}{\binom{i}{m}}\mathbb{P}[|E(G_2)\setminus E|=i-m]=p^m\sum_{i=m}^{\binom{n}{2}}\frac{1}{\binom{i}{m}}\mathbb{P}\left[\mathrm{Bin}\left(\binom{n}{2}-m,p\right)=i-m\right]\\
    &=p^m\sum_{i=m}^{\binom{n}{2}}\frac{1}{\binom{i}{m}}\binom{\binom{n}{2}-m}{i-m}p^{i-m}(1-p)^{\binom{n}{2}-i}=\sum_{i=m}^{\binom{n}{2}}\frac{1}{\binom{i}{m}}\binom{\binom{n}{2}-m}{i-m}p^{i}(1-p)^{\binom{n}{2}-i}.
\end{align*}
Since for $a\geq i\geq m$ we have that
\[\frac{\binom{a}{m}\binom{a-m}{i-m}}{\binom{i}{m}}=\frac{a!(a-m)!(i-m)!m!}{(a-m)!m!(a-i)!(i-m)!i!}=\frac{a!}{(a-i)!i!}=\binom{a}{i},\]
it follows that
\[\binom{\binom{n}{2}}{m}\mathbb{P}[E(G_1)=E]=\sum_{i=m}^{\binom{n}{2}}\binom{\binom{n}{2}}{i}p^{i}(1-p)^{\binom{n}{2}-i}=1,\]
thus showing that $G_1\sim G_{n,m}$.
Second, by Chernoff's inequality and setting $p=(1+\delta)m/\binom{n}{2}$, the probability that the coupling fails can be bounded as
\[\mathbb{P}[e(G_2)<m]=\mathbb{P}[e(G_2)<\mathbb{E}[e(G_2)]/(1+\delta)]\leq\mathbb{P}[e(G_2)<(1-\delta/2)\mathbb{E}[e(G_2)]]\leq\nume^{-\delta^2(1+\delta)m/8},\]
where the second inequality holds for $\delta<1$. 
Since $m=\omega(1)$, as long as $\delta=\omega(m^{-1/2})$, we conclude that $\mathbb{P}[e(G_2)<m]=o(1)$, as we wanted to see.}
As such, in order to conclude our proofs, it suffices to study $H_n\cup G_{n,m}$.

\subsection{Hamiltonicity}

Given that the \ref{0statement1}-statements have already been proved, \cref{thm:thres,thm:sharp} are an immediate consequence of the following theorem. 

\begin{theorem}\label{2ConnectedGraph}
Let $H_n$ be a graph on $n$ vertices with $\delta(H_n)\ge n/2-\eta$, where $1/2\le \eta\le n/64$.
\begin{enumerate}[label=$(\alph*)$]
\item\label{2ConnectedGraphitem1} If $m=\omega(\eta)$, then a.a.s.\ $H_n\cup G_{n,m}$ is Hamiltonian.
\item\label{2ConnectedGraphitem2} If $\eta=\omega(1)$, $\lambda=\omega(\eta^{1/2})$ and $m\geq\frac{2\eta}{1/4-8\eta/n}+\lambda$, then a.a.s.\ $H_n\cup G_{n,m}$ is Hamiltomian.
\end{enumerate}
\end{theorem}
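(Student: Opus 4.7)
The plan is to reduce Hamiltonicity to a counting question about random edges inside a carefully chosen subset $B\subseteq V(H_n)$. For any partition $V(H_n)=A\cupdot B$ with $|A|\le|B|$, any Hamilton cycle $C$ must contain at least $|B|-|A|$ edges inside $B$: each vertex of $A$ has exactly two neighbours on $C$, so the total number of edges of $C$ meeting $A$ is at most $2|A|$, leaving at least $n-2|A|=|B|-|A|$ edges of $C$ entirely inside $B$. Guided by the extremal example (a complete bipartite graph with parts of size $n/2\mp\eta$), one expects $|B|-|A|\le2\eta$ to be the governing quantity, which precisely matches the sharp threshold.

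First I would establish a structural result inside $H_n$ alone: using $\delta(H_n)\ge n/2-\eta$, I would produce a bipartition $V(H_n)=A\cupdot B$ with $|B|-|A|\le2\eta$ together with a flexible spanning substructure of $H_n$ (for instance a near-$2$-factor whose edges mostly cross between $A$ and $B$, or an $A$--$B$ alternating path system covering $V(H_n)$) that can be spliced, using essentially \emph{any} set of $2\eta$ edges inside $B$ whose maximum degree is at most $2$, into a Hamilton cycle of $H_n$ together with those $B$-edges. The crucial ingredient here is flexibility: since the random edges land at uncontrolled locations within $B$, the substructure should accept almost arbitrary patches inside $B$.

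Second, I would verify concentration for $G_{n,m}$. The number of random edges of $G_{n,m}$ inside $B$ is hypergeometric with mean $m\binom{|B|}{2}/\binom{n}{2}\ge m(1/4-8\eta/n)$ for $|B|\le n/2+\eta$, up to negligible corrections. For part~\ref{2ConnectedGraphitem2}, choosing $m\ge 2\eta/(1/4-8\eta/n)+\lambda$ makes this mean exceed $2\eta$ by $\Omega(\lambda)$, and Chernoff's inequality together with $\lambda=\omega(\eta^{1/2})$ yields at least $2\eta$ edges of $G_{n,m}$ inside $B$ a.a.s. Part~\ref{2ConnectedGraphitem1}, with the much weaker hypothesis $m=\omega(\eta)$, follows from the same computation with ample slack to absorb any constant-factor losses in the structural step. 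The main obstacle is precisely that structural step: producing a sufficiently flexible substructure in $H_n$ without invoking the rotation-extension technique, as the authors explicitly emphasise—this most likely requires a stability dichotomy, in which either $H_n$ is already Hamiltonian, or $H_n$ is close enough to the extremal bipartite example that a genuinely bipartite-like spanning structure can be read off directly.
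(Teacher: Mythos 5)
Your reduction to counting random edges inside a near-half set $B$ is only the easy direction (it is exactly how the paper proves its lower-bound statements), and the entire burden of the theorem is placed on the ``structural step'', which you neither prove nor state correctly. The dichotomy you propose --- every $H_n$ with $\delta(H_n)\ge n/2-\eta$ is either Hamiltonian or close to the unbalanced complete bipartite graph --- is false. Take two cliques $C_1,C_2$ with $|C_1\cap C_2|=1$, $|C_1|=n/2-\eta+2$ and $|C_2|=n/2+\eta-1$: this graph has minimum degree $n/2-\eta+1$, is non-Hamiltonian (it has a cut vertex), and is at edit distance $\Theta(n^2)$ from the bipartite extremal example; moreover, what it needs from the random graph is a couple of edges \emph{crossing} between $C_1$ and $C_2$, not an arbitrary collection of $2\eta$ edges inside a fixed set $B$, so no bipartition admits the ``accepts essentially any patch of $2\eta$ edges of maximum degree $2$ inside $B$'' structure you require. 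Even if you first add $\omega(1)$ random edges to force $2$-connectivity (as one must), you would still need a genuine stability theorem for Dirac's theorem \emph{and} a robustness argument showing that approximate closeness to the bipartite example suffices to splice in an essentially arbitrary linear forest inside $B$; neither is supplied, and closeness in edit distance alone does not give the required flexibility (a few vertices with atypical neighbourhoods can block specific patches). So there is a genuine gap at the heart of the argument; the probabilistic part (hypergeometric/Chernoff counting of edges inside $B$) is fine but is not where the difficulty lies.

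For comparison, the paper never proves any global structural or stability statement about $H_n$. It makes the graph $2$-connected with $\omega(1)$ random edges (Bohman--Frieze--Krivelevich--Martin), uses Dirac's lemma to get a cycle of length at least $n-2\eta$, and proves a local ``booster'' lemma: any $2$-connected non-Hamiltonian graph with $\delta\ge n/2-\eta$ has at least $n^2/8-4n\eta$ non-edges whose addition lengthens the longest cycle. Sprinkling random edges one at a time, each round that lengthens the cycle is a geometric waiting time with success probability at least $1/4-8\eta/n$, and at most $2\eta$ rounds are needed; Markov's inequality gives part~(a) and Chebyshev gives part~(b). If you want to pursue your route, the booster lemma is the kind of statement you should aim for in place of the stability dichotomy.
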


Roughly speaking, the idea of the proof is the following.
The minimum degree of the graphs~$H_n$ that we consider ensures that, if they are $2$-connected, then they already contain an almost spanning cycle (see \cref{DiracLemma}).
An analysis of the properties of such $H_n$ shows that, if they are not Hamiltonian, then there are \emph{many} non-edges that, if added to the graph, would result in a graph containing a longer cycle than $H_n$ (essentially as many as in the extremal example, so this can be regarded as a stability version of Dirac's theorem; see \cref{RandomEdges}).
Now, given any $H_n$ with the desired minimum degree, by sprinkling a few random edges, we can ensure that a.a.s.\ it is $2$-connected (using work of \citet{BFKM04}, see \cref{BKFM}).
Then, in rounds, we will add random edges to the graph until a longer cycle can be found.
This is iterated until a Hamilton cycle appears.
Since the starting graph is almost Hamiltonian, the number of rounds that is needed is small. 
Moreover, since at every step there is a large set of ``good'' non-edges, each round is likely to finish rather quickly.
This leads to the improved bounds.

One of the main tools for our proof is the following classical result of Dirac, which ensures that~$H_n$ contains an almost spanning cycle.

\begin{lemma}[{\citet[Theorem 4]{Dirac52}}]\label{DiracLemma}
Any $2$-connected graph $G$ on $n$ vertices with $\delta(G)=d$, where $1< d\le n/2$, contains a cycle of length at least $2d$.
\end{lemma}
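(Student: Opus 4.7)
The plan is to use the classical longest-path argument. I would take a longest path $P = v_0 v_1 \ldots v_k$ in $G$. Because $P$ cannot be extended at either endpoint, $N_G(v_0) \cup N_G(v_k) \subseteq V(P)$, and in particular $k \geq \delta(G) = d$. The main combinatorial device is the ``pivot'' construction: define $A = \{i \in \{1,\ldots,k\} : v_0 v_i \in E(G)\}$ and $B = \{i \in \{1,\ldots,k\} : v_{i-1} v_k \in E(G)\}$, both of size at least $d$. Whenever some index $i$ lies in $A \cap B$, the closed walk $v_0 v_i v_{i+1} \cdots v_k v_{i-1} v_{i-2} \cdots v_1 v_0$ is a cycle $C$ of length $k+1$ with $V(C) = V(P)$.

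I would then split into two regimes. First, if $k + 1 \leq 2d$, then pigeonhole on $A, B \subseteq \{1,\ldots,k\}$ yields $|A \cap B| \geq 2d - k \geq 1$, so the pivot cycle $C$ exists. If $V(P) \neq V(G)$, connectivity of $G$ provides a vertex $u \notin V(P)$ adjacent to some $v_j \in V(C)$; then traversing $C$ starting from $v_j$ and prepending $u$ produces a path of length $k+1$, contradicting the maximality of $P$. Hence $V(P) = V(G)$ and $C$ is a Hamilton cycle, which has length $n \geq 2d$ by the hypothesis $d \leq n/2$.

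The remaining case is $k + 1 > 2d$, where the pivot pigeonhole no longer applies directly. Here I would let $j = \max A$; since $|A| \geq d$ we have $j \geq d$, and if $j \geq 2d - 1$ the simple cycle $v_0 v_1 \cdots v_j v_0$ already has length at least $2d$. Symmetrically for $i = \min B$. Otherwise $N(v_0) \subseteq \{v_1, \ldots, v_{2d-2}\}$ and $N(v_k) \subseteq \{v_{k - 2d + 2}, \ldots, v_{k-1}\}$, so the endpoints' neighbourhoods are concentrated in short initial and final segments of $P$. This concentrated regime is the main obstacle I anticipate: I would handle it by Pósa-style rotations (rotating $P$ across the edge $v_0 v_j$ to obtain new longest paths with different endpoints, whose neighbourhoods are again constrained to lie on $P$) together with $2$-connectivity, which so far has only been used via connectivity. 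The second ``escape'' guaranteed by $2$-connectivity should either force a pivot cycle on a shifted window of $P$ of length $\geq 2d$, or produce a path extension contradicting maximality of $P$, completing the proof.
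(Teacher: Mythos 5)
This lemma is not proved in the paper at all: it is quoted verbatim as Theorem~4 of Dirac's 1952 paper, so the only fair comparison is with the classical proof. Your opening moves are fine: the longest-path setup, the sets $A$ and $B$ of size at least $d$, the pivot cycle when $A\cap B\neq\emptyset$ together with the observation that maximality of $P$ plus connectivity forces such a cycle to be Hamiltonian (of length $n\ge 2d$), and the easy subcases $\max A\ge 2d-1$ or $\min B\le k-2d+2$ are all correct.

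The problem is that the remaining case, where $N(v_0)\subseteq\{v_1,\dots,v_{2d-2}\}$ and $N(v_k)\subseteq\{v_{k-2d+2},\dots,v_{k-1}\}$, is precisely the content of Dirac's theorem, and your proposal does not prove it: ``Pósa-style rotations together with $2$-connectivity \emph{should} either force a pivot cycle on a shifted window or produce a path extension'' is a hope, not an argument. Note that up to this point you have only used connectivity, and the example of two cliques glued at a single cut vertex (minimum degree $d$, longest cycle of length $d+1$) shows that no argument can finish without using $2$-connectivity in an essential, concrete way. Rotations by themselves do not obviously do this: a rotation keeps the vertex set $V(P)$ fixed and only constrains the neighbourhood of the new endpoint, and the Pósa machinery naturally yields either a longer path or a cycle spanning $V(P)$ --- it does not directly yield a cycle of length at least $2d$ on a proper subset of $V(P)$, which is what is needed here. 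The classical proofs close this case by a genuinely different argument (for instance, working with a longest cycle and a path attached to it obtained from the fan lemma, counting the endpoints' neighbours on the two arcs, or by induction on $n$ using $2$-connectivity), so the missing step is not a routine verification but the main difficulty of the theorem; as written, your proof is incomplete.
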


The next lemma ensures that, for any $2$-connected non-Hamiltonian graph $H_n$ with high minimum degree, there is a large number of non-edges (essentially as many as in an unbalanced complete bipartite graph) which, if added to $H_n$, would result in a graph containing longer cycles than $H_n$.

\begin{lemma} \label{RandomEdges}
Let $H_n$ be an $n$-vertex $2$-connected graph with $\delta(H_n)\ge n/2-\eta>1$.
Suppose $H_n$ is not Hamiltonian and let $\cC$ be a longest cycle in $H_n$.
Then, there exists a set $E\subseteq  E(K_n)$ of size $|E|\ge n^2/8-4n\eta$ such that, for any $e\in E$, $H_n\cup \{e\}$ contains a cycle longer than $\cC$.\COMMENT{In fact, for every $v\in V(H_n)$, there is a set $E_v$ of size $|E_v|\ge n^2/8-4n\eta$ such that, for any $e\in E_v$, $H_n\cup \{e\}$ contains a cycle which is longer than $\cC$ containing $v$.}
\end{lemma}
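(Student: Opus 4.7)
My plan is to produce $E$ as the set of non-edges inside a single independent subset $I\subseteq V(\mathcal{C})$, constructed from one Ore-style cycle-swap argument; rotation-extension will not be used.

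Since $H_n$ is not Hamiltonian, I would fix any vertex $u\in V(H_n)\setminus V(\mathcal{C})$ and write $\mathcal{C}=v_1v_2\cdots v_c v_1$. Set $N:=\{i\in[c]:uv_i\in E(H_n)\}$. Before constructing $I$ I would record two routine facts. First, by the maximality of $\mathcal{C}$, the set $N$ cannot contain two cyclically consecutive indices, since replacing the cycle edge $v_iv_{i+1}$ by the detour $v_i\,u\,v_{i+1}$ would already yield a cycle of length $c+1$. Second, applying \cref{DiracLemma} to the $2$-connected graph $H_n$ gives $c\geq 2\delta(H_n)\geq n-2\eta$, so at most $2\eta-1$ neighbours of $u$ can lie outside $\mathcal{C}$, and therefore $|N|\geq n/2-3\eta+1$.

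I would then introduce the shifted set $I:=\{v_{i+1}:i\in N\}$, with indices taken modulo $c$. The no-two-consecutive property of $N$ ensures that the map $i\mapsto v_{i+1}$ is injective on $N$ and that $I$ is disjoint from $\{v_i:i\in N\}$, so $|I|=|N|\geq n/2-3\eta+1$. The heart of the argument is the claim that $I$ is independent in $H_n$ \emph{and} that, for every pair $\{v_{i+1},v_{j+1}\}\in\binom{I}{2}$ (with $i\neq j$ in $N$), the graph $H_n\cup\{v_{i+1}v_{j+1}\}$ contains a cycle of length $c+1$. The witness is the closed walk
\[
u\,v_i\,v_{i-1}\cdots v_{j+1}\,v_{i+1}\,v_{i+2}\cdots v_j\,u,
\]
in which the first arc traverses $\mathcal{C}$ backwards from $v_i$ through $v_1,v_c$ until reaching $v_{j+1}$, then crosses to $v_{i+1}$ along the edge $v_{i+1}v_{j+1}$, and the second arc continues forwards along $\mathcal{C}$ up to $v_j$. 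This visits each vertex of $V(\mathcal{C})\cup\{u\}$ exactly once, so it is a simple $(c+1)$-cycle; running the same construction with the crossing edge already present in $H_n$ contradicts the maximality of $\mathcal{C}$, simultaneously giving independence of $I$ and the ``longer cycle after adding one edge'' conclusion.

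Setting $E:=\binom{I}{2}$, all that remains is the cardinality estimate
\[
|E|\geq\binom{n/2-3\eta+1}{2}\geq \frac{n^2}{8}-4n\eta,
\]
which follows from $\eta\geq 1/2$ by direct expansion. The only delicate point I expect is the bookkeeping in the cycle construction: one must check that, across all cyclic configurations of $i,j\in N$ (including the wrap-around cases when $v_{j+1}=v_1$ or $v_{i+1}=v_1$), the two $\mathcal{C}$-arcs in the displayed cycle are internally disjoint and meet only at the four intended endpoints $v_i,v_{i+1},v_j,v_{j+1}$. This is exactly what the no-two-consecutive property $i+1\neq j$ and $j+1\neq i$ guarantees, so the combinatorial core of the lemma reduces to Dirac's bound on $c$ and a single Ore-type swap.
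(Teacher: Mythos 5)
Your proposal is correct, but it builds the set $E$ by a genuinely different construction than the paper. The paper also fixes an off-cycle vertex and uses \cref{DiracLemma}, but its boosting edges are not the pairs inside one shifted neighbourhood: it first counts ``gap'' vertices $w\in W_v$ whose two cycle-neighbours both lie in $N_{\mathcal{C}}(v)$, via a decomposition of $\mathcal{C}$ into paths induced by $N_{\mathcal{C}}(v)$ (giving at least $n/2-9\eta$ such $w$), and then, for each chord $wu\in E(H_n)$ with $u\in N_{\mathcal{C}}(w)\setminus\{w^+\}$, takes the non-edge $wu^-$; the longer cycle inserts $v$ in place of $w$ and re-inserts $w$ between $u^-$ and $u$. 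Your route is the classical crossing-chord (Ore/Chv\'atal--Erd\H{o}s-type) observation: the successors along $\mathcal{C}$ of the cycle-neighbours of an off-cycle vertex $u$ form a set $I$ of size at least $n/2-3\eta+1$ spanning no edge of $H_n$, and adding any pair inside $I$ yields a $(c+1)$-cycle through $u$; your witness cycle and the use of the no-two-consecutive property are exactly right, and the independence of $I$ and the ``longer cycle after one added edge'' conclusion indeed come from the same swap. What your approach buys is a shorter argument (no path-decomposition counting step) and a marginally better count, $\binom{n/2-3\eta+1}{2}$ versus the paper's $(n/2-9\eta)(n/2-3\eta-1)/2$; both comfortably exceed $n^2/8-4n\eta$ in the regime where that bound is nonnegative (otherwise the statement is vacuous, a degenerate case both you and the paper pass over silently), so either argument serves equally well as input to \cref{2ConnectedGraph}.
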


\begin{proof}
For each $v\in V(H_n)$, let $N_{\cC}(v)\coloneqq\{w\in V(\cC)\mid vw\in E(H_n)\}$. By \cref{DiracLemma}, the cycle $\cC$ has length at least $n-2\eta$.   From the definition of $N_{\cC}(v)$ and the minimum degree condition, for any $v\in V(H_n)$, we have that
\begin{equation}\label{MinDegCondt}
|N_{\cC}(v)|\ge n/2-3\eta.
\end{equation}

Let us now fix an orientation of the cycle $\cC$ and, for each vertex $w\in V(\cC)$, denote by $w^-$ and $w^+$  the predecessor and the successor of  $w$ in the orientation of $\cC$, respectively. Then, fix an arbitrary vertex $v\in V(H_n)\setminus V(\cC)$. Note that, if $w\in N_{\cC}(v)$, then $w^-\notin N_{\cC}(v)$, as otherwise $\cC\cup\{ w^-v,vw\}\setminus \{w^-w\}$  would form a cycle longer than $\cC$. Analogously, $w^+\notin N_{\cC}(v)$.  From this fact, one can easily see that  $N_{\cC}(v)$ decomposes $\cC$ into paths of length at least $2$. For each $i\ge2$, denote by $X_i$ the number of paths of length $i$ in $\cC$ which result from this decomposition of $\cC$, so we have that
\begin{equation*}
\underset{i\ge2}{\sum}X_{i}=|N_{\cC}(v)|\ge n/2-3\eta.
\end{equation*}
It follows  that 
\begin{equation*}
|V(\cC)|=\underset{i\ge2}{\sum} iX_i \ge 2X_2+3\left(\underset{i\ge2}{\sum}X_{i}-X_2\right)\ge 2X_2+3\left(\frac{n}{2}-3\eta-X_2\right),
\end{equation*}
which implies that 
\begin{equation} \label{C-4bound}
X_2\ge \frac{n}{2}-9\eta.
\end{equation}

Let $W_v\coloneqq \{w\in V(\cC)\setminus N_{\cC}(v)\mid w^-,w^+\in N_{\cC}(v)\}$ and note that $|W_v|=X_2$.
Observe that,  if $w\in W_v$ and $u\in N_{\cC}(w)\setminus\{w^+\}$, then $u^-\notin N_{\cC}(w)$, as otherwise 
\[\cC\setminus \{w^-w,ww^+,u^-u\}\cup\{ w^-v,vw^+\}\cup\{u^-w,wu\}\]
would form a cycle longer than $\cC$.
In other words, for every $w\in W_v$ and $u\in N_{\cC}(w)\setminus\{w^+\}$, we have that $wu^-\notin E(H_n)$ and $H_n\cup\{wu^-\}$ would have a cycle longer than $\cC$.
Thus, using \eqref{MinDegCondt} and \eqref{C-4bound}, there exist at least $ (n/2-9\eta)(n/2-3\eta-1)/2\ge n^2/8-4n\eta$  edges $e\in E(K_n)\setminus E(H_n)$ such that $H_n\cup \{e\}$ contains a longer cycle than $\cC$ (where we divide by $2$ to avoid double counting  edges).
\end{proof}
Our last tool is a particular case of a result of \citet{BFKM04} which ensures that, if $H_n$ has high minimum degree, then a.a.s.\ we need to add very few random edges for it to become $2$-connected.

\begin{lemma}[{\citet[Theorem~6]{BFKM04}}]\label{BKFM}
Let $H_n$ be an $n$-vertex graph with $\delta(H_n)\ge\alpha n$, where $\alpha\in(0,1)$ is fixed. If $m=\omega(1)$, then a.a.s.\ $H_n\cup G_{n,m}$ is $2$-connected.
\end{lemma}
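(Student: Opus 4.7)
The plan is to exploit the structural rigidity forced by $\delta(H_n)\ge\alpha n$ to reduce the failure of $2$-connectivity to a number of ``local bridging'' events that is bounded in terms of $\alpha$ alone (not in $n$), each of which the random edges handle easily.

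The first step will be to prove that $H_n$ has at most a constant number (depending on $\alpha$) of cut vertices. Every connected component of $H_n$ has $\ge\alpha n+1$ vertices (a vertex together with its $\alpha n$ neighbours), so $H_n$ has $\le 1/\alpha$ components, and I would apply the following block-cut tree analysis to each. In the block-cut tree $T$ of a component, every leaf is a block $B$ containing a ``private'' (non-cut) vertex, whose $\ge\alpha n$ neighbours all lie in $B$, so $|V(B)|\ge\alpha n+1$. Since private vertices of distinct blocks are disjoint, $T$ has at most $1/\alpha$ leaves, and hence only $\bigO(1/\alpha)$ branching vertices. Along each bare path of $T$ between branching nodes or leaves, interior blocks are either \emph{big} (containing a private vertex, hence of size $\ge\alpha n+1$) or $K_2$'s, and two consecutive $K_2$ blocks cannot occur, since then the cut vertex between them would have $H_n$-degree $2\ll\alpha n$. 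Counting private vertices bounds the number of big blocks on each bare path, and hence its length, so altogether the number of cut vertices of $H_n$ is at most some constant $C=C(\alpha)$.

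Next I would identify the ``bad events''. Since $H_n\subseteq H_n\cup G_{n,m}$, any cut vertex of the union must already be a cut vertex of $H_n$. Therefore $H_n\cup G_{n,m}$ fails to be $2$-connected only if either (a)~the union is disconnected, or (b)~some cut vertex $v$ of $H_n$ remains a cut vertex, i.e.\ for some non-trivial partition of the components of $H_n-v$, no edge of $G_{n,m}$ bridges the two sides. In every such scenario, the bad event witnesses two disjoint subsets $A,B\subseteq V(H_n)$ of size $\ge\alpha n-1$, determined solely by $H_n$, such that no edge of $G_{n,m}$ has one endpoint in each. The number of potential bridging edges between such $A$ and $B$ is at least $(\alpha n-1)^2=\Omega(n^2)$, so
\[\PP[\text{no edge of $G_{n,m}$ lies between $A$ and $B$}]\le\left(1-\frac{(\alpha n-1)^2}{\binom{n}{2}}\right)^{\!m}\le\nume^{-cm}\]
for some $c=c(\alpha)>0$. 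Since the total number of bad events (disconnection partitions, plus for each of the $\le C$ cut vertices of $H_n$ the $\le 2^{1/\alpha}$ partitions of its $\le 1/\alpha$ components) is $\bigO_\alpha(1)$, a union bound gives $\PP[H_n\cup G_{n,m}\text{ is not $2$-connected}]\le \bigO_\alpha(1)\cdot\nume^{-cm}=o(1)$, using $m=\omega(1)$.

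The main obstacle is bounding the number of cut vertices of $H_n$: a naive union bound over all $n$ vertices would only yield the weaker conclusion $m=\omega(\log n)$, whereas the lemma allows $m=\omega(1)$. The block-cut tree analysis in the first step replaces this factor of $n$ by a constant, after which everything downstream is a routine $G_{n,m}$ tail estimate.
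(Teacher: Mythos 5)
Your reduction in the second step rests on a false claim: that ``any cut vertex of the union must already be a cut vertex of $H_n$''. What is true is that if $(H_n\cup G_{n,m})-v$ is disconnected then $H_n-v$ is disconnected; this makes $v$ a cut vertex of $H_n$ only when $H_n$ itself is connected. But the hypothesis $\delta(H_n)\ge\alpha n$ permits $H_n$ to be disconnected for every fixed $\alpha<1/2$, which is precisely the non-trivial regime of the lemma. Concretely, let $H_n$ be two disjoint cliques $A$ and $B$ of order $n/2$: it has no cut vertex at all, yet $H_n\cup G_{n,m}$ has a cut vertex whenever there is a crossing random edge and all crossing random edges share an endpoint $a$. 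Such failures lie outside your two bad events, and — more importantly for the union bound — in this situation the witnessing separation $\bigl(A\setminus\{a\},B\bigr)$ depends on the random vertex $a$, so it is \emph{not} determined solely by $H_n$: there are $\Theta(n)$ candidate separations (one per vertex), and bounding each by $\nume^{-cm}$ and summing returns you to the $m=\omega(\log n)$ conclusion you were explicitly trying to avoid. Your first step (the block--cut-tree argument showing $H_n$ has $\bigO_\alpha(1)$ cut vertices) is correct, and your treatment is complete in the case that $H_n$ is connected; the gap is exactly the disconnected case.

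The gap can be repaired along your own lines by a two-round exposure: split the random edges into two independent halves (or work in $G(n,p)$ and transfer). The first half a.a.s.\ makes the graph connected, since only the at most $2^{\lceil 1/\alpha\rceil}$ partitions of the components of $H_n$ must be ruled out. Your block--cut-tree bound uses nothing but the minimum degree, so it applies verbatim to the (now connected) graph $H_n\cup G_1$ and yields $\bigO_\alpha(1)$ cut vertices of $H_n\cup G_1$; running your second step against the $\bigO_\alpha(1)$ bad events determined by $H_n\cup G_1$, with the second, fresh half of the random edges (independent of those events), then gives the result with $m=\omega(1)$. For comparison: the paper does not prove this lemma at all — it quotes it as Theorem~6 of \cite{BFKM04} — so there is no in-paper argument to measure your proof against; as written, though, your proof is incomplete for the reason above.
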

With these results in hand, we can already prove \cref{2ConnectedGraph}.

\begin{proof}[Proof of \cref{2ConnectedGraph}]

First, let $m_0=\omega(1)$ grow arbitrarily slowly. Let $G_0\coloneqq H_n\cup G_{n,m_0}$. By \cref{BKFM}, a.a.s.\ $G_0$ is $2$-connected. Condition on this event. Let $\cC$ be a longest cycle in $G_0$ and  $k \coloneqq |V(H_n)\setminus V(\cC)|$. Note that $k\le 2\eta$ by \cref{DiracLemma}. Let $E_0$ be the set of all edges 
$e\in E(K_n)\setminus E(G_0)$ such that $G_0\cup\{e\}$ has a longer cycle than $G_0$.

Now, consider a sequence of random graphs $G_0\subseteq G_1\subseteq G_2\subseteq \dots\subseteq G_k$ defined as follows. For each $i\in [k]$ , if $ G_{i-1}^0\coloneqq G_{i-1}$  is not Hamiltonian, we start sampling edges $e^{(1)}_i, e^{(2)}_i,\ldots\in E(K_n)$ uniformly at random with replacement and, for each $j\in \NN$, define $G_{i-1}^j\coloneqq G_{i-1}^{j-1}\cup\{e_i^{(j)}\}$. In this case, we let $Y_i$ be the minimum $j\in \NN$ such that $e_i^{(j)}\in E_{i-1}$ and, then, set $G_i\coloneqq G_{i-1}^{Y_i}$. Moreover, we define $E_i$ to be the set of all edges $e\in E(K_n)\setminus E(G_i)$ such that $G_i\cup\{e\}$ has a longer cycle than $G_i$. If instead $G_{i-1}$ is Hamiltonian, we let $G_i=G_{i+1}=\dots=G_k\coloneqq G_{i-1}$ and $Y_i=Y_{i+1}=\dots=Y_k\coloneqq0$. 

Let $h$ be the minimum $i\in [k]$ for which $G_i$ is Hamiltonian.\COMMENT{Note that $h$ is a random variable.} Note that this is well defined by the definition of $k$, since each subsequent $G_i$ either is Hamiltonian or contains a longer cycle than $G_{i-1}$. By \cref{RandomEdges}, the random variables $Y_1,Y_2,\dots,Y_h$ are independent geometric random variables of parameter $p_i={|E_{i-1}|}/{\inbinom{n}{2}}\ge1/4-8\eta/n$.  We set $Y\coloneqq\sum_{i=1}^kY_i$ to be the total number of random edges added until $G_k$ is Hamiltonian, and let $G^Y$ denote the $n$-vertex graph consisting of all these random edges. Note that there is a trivial coupling between $G_{n,m_0}$ and $G_{n,m_0+Y}$ such that $G_{n,m_0}\cup G^Y\subseteq G_{n,m_0+Y}$, so it suffices to bound $Y$ to reach the desired conclusion.

In order to prove \ref{2ConnectedGraphitem1}, note that  $\EE[Y]\le 8k\le16\eta$. Thus, by Markov's inequality\COMMENT{ $\PP(Y\ge m)\le \frac{8\eta(1+o(\eta/n))}{m}$} and since $m_0$ is chosen to grow arbitrarily slowly, if $m=\omega(\eta)$, then a.a.s.\ $Y\le m$, that is, a.a.s.\ adding  $m_0+m=\omega(\eta)$ random edges to  $H_n$ suffices to obtain a  Hamiltonian graph.


In order to prove \ref{2ConnectedGraphitem2}, assume that $\eta=\omega(1)$. We have that $\EE[Y]\le \frac{2\eta}{1/4-8\eta/n}$ and $\Var (Y)\le 112\eta$.\COMMENT{$\Var(Y)\le\sum_{i=1}^k\Var(Y_i)\le2\eta\frac{3/4+8\eta/n}{(1/4-8\eta/n)^2}\le 2\eta\frac{7/8}{(1/8)^2}\le112\eta$} By Chebyshev's inequality, for any $\lambda=\omega(\eta^{1/2})$, we have that $\PP[| Y-\EE[Y]|\ge \lambda/2]=o(1)$, so a.a.s.\ $Y\le \EE[Y]+\lambda/2\le \frac{2\eta}{1/4-8\eta/n}+\lambda/2$. Since $m_0$ is chosen to grow arbitrarily slowly, we may take $m_0\le\lambda/2$, and it follows that a.a.s.\ adding $m_0+Y\le \frac{2\eta}{1/4-8\eta/n}+\lambda$ random edges to $H_n$ suffices to obtain a Hamiltonian graph.
\end{proof}

\subsection{Perfect matchings}

For simplicity, throughout this section we assume that $n$ is even.
Our goal is to complete the proof of \cref{thm:PM}.
Since we already showed its \ref{0statement2}-statement, it suffices to prove the following result.
(For the sake of simplifying the calculations, we state it only for $\eta=o(n)$, which suffices for \cref{thm:PM}; for larger values of $\eta$, a statement similar to \cref{2ConnectedGraph}~\ref{2ConnectedGraphitem2} holds as well.)

\begin{theorem}\label{PMatching}
Let $H_n$ be a graph on $n$ vertices with minimum degree at least $n/2-\eta$. 
If $\eta=o(n)$ with $\eta=\omega(1)$, $\lambda=\omega(\eta^{1/2})$ and $m\ge 4\eta+\lambda$, then a.a.s.\ $H_n\cup G_{n,m}$ contains a perfect matching.
\end{theorem}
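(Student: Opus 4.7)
The plan is to follow the structure of the proof of \cref{2ConnectedGraph}, replacing the Hamiltonian machinery (Dirac's theorem and \cref{RandomEdges}) with matching analogues. The main ingredient will be a stability lemma playing the role of \cref{RandomEdges}: for every $n$-vertex graph $H$ with $\delta(H)\ge n/2-\eta$ and no perfect matching, there is a set $E\subseteq E(K_n)\setminus E(H)$ with $|E|\ge n^2/8-O(n\eta)$ such that, for every $e\in E$, the graph $H\cup\{e\}$ has a maximum matching strictly larger than the one of $H$. Combined with the fact (provable via the Tutte--Berge/Gallai--Edmonds formula together with the minimum degree hypothesis) that any graph with $\delta\ge n/2-\eta$ has a maximum matching of size at least $n/2-\eta-O(1)$, at most $\eta+O(1)$ augmentation rounds will be needed to reach a perfect matching.

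To prove the stability lemma, fix a maximum matching $M$ in $H$ and let $U$ be the set of $M$-unsaturated vertices, so $|U|\ge 2$ and $U$ is independent (else $M$ would extend). Fix two distinct $u_1,u_2\in U$, and for $i\in\{1,2\}$ set $R_i\coloneqq\{v': v\in N(u_i)\}$, where $v'$ denotes the $M$-partner of $v$; the minimum degree condition yields $|R_i|\ge n/2-\eta$. The key observation is that, for any $(w_1,w_2)\in R_1\times R_2$ with $w_1\neq w_2$ and $w_1w_2\notin M$, the pair $w_1w_2$ cannot be an edge of $H$: otherwise, writing $v_i$ for the $M$-partner of $w_i$, the path $u_1-v_1-w_1-w_2-v_2-u_2$ alternates non-matching and matching edges of $H$, contradicting the maximality of $M$. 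A short case analysis shows that the six vertices $u_1,v_1,w_1,w_2,v_2,u_2$ are pairwise distinct whenever $w_1\neq w_2$ and $w_1w_2\notin M$ (any collision $v_1=w_2$ or $v_2=w_1$ would force $w_1w_2\in M$). Consequently, at most $|R_1\cap R_2|+2|M|=O(n)$ ordered pairs in $R_1\times R_2$ are either diagonal or $H$-edges, so the number of ordered pairs $(w_1,w_2)\in R_1\times R_2$ with $w_1\neq w_2$ and $w_1w_2\notin E(H)$ is at least $(n/2-\eta)^2-O(n)\ge n^2/4-O(n\eta)$. Each such pair yields an $M$-augmenting path in $H\cup\{w_1w_2\}$, so $w_1w_2$ is good; dividing by $2$ to pass to unordered pairs gives the desired $n^2/8-O(n\eta)$ bound.

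Equipped with this lemma, the iteration proceeds exactly as in the proof of \cref{2ConnectedGraph}\ref{2ConnectedGraphitem2}, but no sprinkling for $2$-connectivity is needed. Setting $G_0\coloneqq H_n$ and letting $k\le\eta+O(1)$ be half the initial deficiency, we define $G_0\subseteq G_1\subseteq\ldots\subseteq G_k$ by sampling uniform edges with replacement until the maximum matching increases; by the stability lemma (and monotonicity of the minimum degree), each waiting time $Y_i$ is geometric with parameter at least $1/4-O(\eta/n)$. Summing yields $\EE[Y]\le\eta/(1/4-O(\eta/n))=4\eta+O(\eta^2/n)$ and $\Var(Y)=O(\eta)$, so Chebyshev's inequality gives $Y\le 4\eta+\lambda$ a.a.s.\ whenever $\lambda=\omega(\eta^{1/2})$ (the second-order error $O(\eta^2/n)$ is absorbed into $\lambda$ since $\eta=o(n)$). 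The coupling between $G_{n,m}$ and $G(n,p)$ already set up in \cref{sec:proofs} then transfers the conclusion to $H_n\cup G(n,p)$.

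The main obstacle I expect is the stability lemma, and inside it the distinctness argument for the augmenting path: one needs to rule out the cases where the ``forward'' walk from $u_1$ and the ``backward'' walk from $u_2$ share a vertex. Fortunately, any such coincidence forces the would-be new edge $w_1w_2$ to lie in $M$ itself, and these pathological pairs can be subtracted off without affecting the leading $n^2/8$ term.
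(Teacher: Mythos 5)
Your proposal is correct and essentially reproduces the paper's own argument: the same key lemma (the sets of $\cM$-partners of the neighbours of two unsaturated vertices give at least $n^2/8-O(\eta n)$ non-edges whose addition creates a length-$5$ augmenting path, exactly as in \cref{MAtchingEdges}), followed by the same round-by-round process of independent geometric waiting times with success probability at least $1/4-O(\eta/n)$ over at most $\eta+O(1)$ rounds, concluded with Chebyshev as in \cref{2ConnectedGraph}~\ref{2ConnectedGraphitem2}. The only (harmless) deviation is that you skip the paper's initial sprinkling for $2$-connectivity (\cref{BKFM}, which the paper uses only so that \cref{DiracLemma} yields $|\cM|\ge n/2-\eta$), replacing it by the observation that the minimum-degree hypothesis alone bounds the deficiency --- a valid minor simplification rather than a different route.
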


The proof of \cref{PMatching} follows along similar lines as that of \cref{2ConnectedGraph}.
The main part is to show that, if $H_n$ is $2$-connected, then there are \emph{many} non-edges that, if added to $H_n$, would result in a graph with a larger matching than $H_n$.

\begin{lemma}\label{MAtchingEdges}
Let $H_n$ be an $n$-vertex $2$-connected graph  with $\delta(H_n)\ge n/2-\eta>1$. Assume that $H_n$ does not contain a perfect matching and let $\cM$ be a largest matching in $H_n$. Then, there exists a set $M\subseteq E(K_n)$ of size at least $n^2/8-4\eta n$ such that, for any $e\in M$, $H_n\cup \{e\}$ contains a matching which is larger than $\cM$.\COMMENT{In fact, for any pair of vertices $u,v\in V(H_n)\setminus V(\cM)$, there exists a set $M_{uv}$ such that, for any $e\in M_{uv}$, $H_n\cup \{e\}$ contains a larger matching containing $u$ and $v$.}
\end{lemma}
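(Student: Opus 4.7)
The plan is to mirror the proof of \cref{RandomEdges}, but with a maximum (rather than longest) matching $\cM$ and a standard length-$5$ augmenting-path argument in place of the rotation step. Since $\cM$ is not perfect and $n$ is even, the set $U\coloneqq V(H_n)\setminus V(\cM)$ of $\cM$-unsaturated vertices has $|U|\ge 2$ and is independent, so $N(u),N(v)\subseteq V(\cM)$ for any $u,v\in U$. I would fix two distinct vertices $u,v\in U$, let $\phi\colon V(\cM)\to V(\cM)$ be the involution sending each matched vertex to its $\cM$-partner, and introduce the \emph{partner sets} $P_u\coloneqq\phi(N(u))$ and $P_v\coloneqq\phi(N(v))$. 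Bijectivity of $\phi$ on $V(\cM)$ immediately gives $|P_u|=|N(u)|\ge n/2-\eta$, $|P_v|\ge n/2-\eta$ and $|P_u\cap P_v|=|N(u)\cap N(v)|$.

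The heart of the argument is to show that the set $M$ of unordered pairs $\{p,q\}$ with $p\in P_u$, $q\in P_v$ and $p\ne q$ already satisfies the conclusion of the lemma. Given such a pair, set $x\coloneqq\phi(p)\in N(u)$ and $y\coloneqq\phi(q)\in N(v)$, and consider the alternating walk $u,x,p,q,y,v$, whose outer edges $ux,xp,qy,yv$ all lie in $H_n$ and alternate non-matching/matching. Maximality of $\cM$ first rules out $pq\in\cM$, since otherwise $\phi(p)=q\in N(u)$ and $\phi(q)=p\in N(v)$ would yield the length-$3$ augmenting path $u,q,p,v$; this forces the six vertices of the walk to be distinct, because the only non-trivial identifications $x=q$ or $p=y$ both amount to $pq\in\cM$. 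The same maximality then also forbids $pq\in E(H_n)\setminus\cM$, as otherwise the walk would already be a length-$5$ augmenting path inside $H_n$. Hence $pq$ is a non-edge of $H_n$, and adding it turns the walk into a genuine augmenting path, producing a matching of size $|\cM|+1$.

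To close, I would count $|M|$ via a short inclusion--exclusion on whether each endpoint lies in $P_u\cap P_v$, $P_u\setminus P_v$ or $P_v\setminus P_u$, obtaining
\[
|M| \;=\; |P_u|\,|P_v| - \binom{|P_u\cap P_v|+1}{2}.
\]
The right-hand side is decreasing in $|P_u\cap P_v|$ at fixed $|P_u|,|P_v|$, so plugging in $|P_u\cap P_v|\le\min(|P_u|,|P_v|)$ together with $|P_u|,|P_v|\ge n/2-\eta$ shows that the minimum is $\binom{\lceil n/2-\eta\rceil}{2}$, and a routine expansion confirms this is at least $n^2/8-4n\eta$ throughout the regime $\eta\ge 1/2$ relevant to the paper. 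I expect the one slightly delicate step to be the distinctness/non-edge verification in the previous paragraph, where the maximality of $\cM$ must be invoked twice --- once to exclude $pq\in\cM$ (which secures distinctness) and once more to exclude $pq$ from $E(H_n)\setminus\cM$; everything else is pure bookkeeping.
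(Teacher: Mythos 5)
Your proof is correct and its core is the same as the paper's: your partner sets $P_u,P_v$ are exactly the paper's $N_{\cM}(u),N_{\cM}(v)$, and the key step --- that any pair $p\in P_u$, $q\in P_v$ with $p\neq q$ is a non-edge of $H_n$ whose addition creates a length-$5$ $\cM$-augmenting path through $u$ and $v$ --- is precisely the paper's argument. Two minor points where you diverge, both in your favour: (i) you bound $|P_u|,|P_v|\ge n/2-\eta$ by observing that the $\cM$-unsaturated vertices form an independent set, whereas the paper instead invokes $2$-connectivity together with \cref{DiracLemma} to get $|\cM|\ge n/2-\eta$ and then settles for the weaker bound $|N_{\cM}(u)|\ge n/2-3\eta$; your route shows in passing that the $2$-connectivity hypothesis is not actually needed for this lemma (it is needed elsewhere, in \cref{RandomEdges}). (ii) You explicitly exclude $pq\in\cM$ via a length-$3$ augmenting path before assembling the length-$5$ path, a degenerate case the paper passes over silently (and which, as your argument shows, cannot occur). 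Your exact count $|M|=|P_u||P_v|-\binom{|P_u\cap P_v|+1}{2}$ is correct and more precise than necessary; the paper simply takes the worst case $P_u=P_v$ and uses $\binom{n/2-3\eta}{2}\ge n^2/8-4\eta n$, and your bound $\binom{\lceil n/2-\eta\rceil}{2}\ge n^2/8-n\eta$ (using $\eta\ge 1/2$) is comfortably stronger.
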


\begin{proof}
By \cref{DiracLemma}, $H_n$ contains a cycle of length at least $n-2\eta$, so $H_n$ contains a matching of size at least $n/2-\eta$. Let $\cM$ be a largest matching in $H_n$ and, for each $w\in V(\cM)$, let $w^{\cM}$ denote the vertex $w'\in V(\cM)$ such that $ww'\in \cM$. Moreover, for each $u\in V(H_n)\setminus V(\cM)$, let $N_{\cM}(u)\coloneqq\{w\in V(\cM) \mid  w^{\cM}\in N(u)\}$, where $N(u)$ represents the set of all neighbours of~$u$ in~$H_n$. Note that, for any pair of distinct vertices $u,v\in V(H_n)\setminus V(\cM)$, $H_n$ cannot contain a $(u,v)$-path of length $5$ using two edges of $\cM$, as otherwise this would be an $\cM$-augmenting path.  It follows that, for any  pair of distinct $u,v\in V(H_n)\setminus V(\cM)$ and all $x\in N_{\cM}(u)$ and $y\in N_{\cM}(v)$ with $x\neq y$, we must have that $e=xy\notin E(H_n)$, since otherwise we would have one such  path of length $5$. In particular, each such edge is a potential edge that, if added to $H_n$, creates a graph with a larger matching than $\cM$. From the minimum degree condition and the size of $\cM$, $|N_{\cM}(u)|\ge n/2-3\eta$ and $|N_{\cM}(v)|\ge n/2-3\eta$. Hence, there exist at least $\inbinom{n/2-3\eta}{2}\ge n^2/8-4\eta n$ such potential edges.
\end{proof}

The proof of \cref{PMatching} is now essentially the same as the proof of \cref{2ConnectedGraph}~\ref{2ConnectedGraphitem2}, using \cref{MAtchingEdges} instead of \cref{RandomEdges}.
The main difference is that, through the sequence of random graphs, each subsequent largest matching contains at least two more vertices than the previous, and so one may take $k\coloneqq|V(H_n)\setminus V(\mathcal{M})|/2\leq\eta$ (where $\mathcal{M}$ is a largest matching in $G_0$).
This smaller number of random graphs in the sequence leads to the improved bound on $m$.
We omit the details of the proof.

\section{Open problems}

With \cref{thm:sharp,thm:PM}, we have showed that, if $d=n/2-o(n)$, the sharp $d$-thresholds for Hamiltonicity and perfect matchings coincide with what is needed for a randomly perturbed unbalanced complete bipartite graph to contain a Hamilton cycle or perfect matching.
It seems plausible that unbalanced complete bipartite graphs should also witness the sharp $d$-threshold for perfect matchings for smaller values of $d$.
Let $d=\alpha n$ for some fixed constant $\alpha\in(0,1/2)$, and let $H_n$ be a complete bipartite graph with parts $A$ and $B$ of size $d$ and $n-d$, respectively.
Note that $H_n$ contains a matching of size $d$, where, moreover, its $d$ vertices in $B$ can be chosen arbitrarily.
We know, from the work of \citet{BFM03} and the extremal example of the complete bipartite graph, that the sharp $d$-threshold for the containment of a perfect matching (if it exists) must be of the form $C/n$, where $C=C(\alpha)$ is a constant. 
We conjecture that this threshold should coincide with the threshold for $G(n,p)[B]\sim G(n-d,p)$ to contain a matching of size $n/2-d$ (which can then be completed using edges of $H_n$).
The (likely) size of a largest matching in sparse random graphs was determined by \citet{KS81} (see also \cite[Theorem~4]{AFP98} for a concrete expression).
Using their work, we propose the following conjecture.

\begin{conjecture}\label{conj:PM}
    Let $\alpha\in(0,1/2)$ be fixed, and let $d=\alpha n$.
    The sharp $d$-threshold for containing a perfect matching is $C/n$, where $C=C(\alpha)$ is the solution to the equation
    \[1-\frac{\gamma_*+\gamma^*+\gamma_*\gamma^*}{(2-2\alpha)C}=\frac{1-2\alpha}{2-2\alpha},\]
    where $\gamma_*$ is the smallest root of the equation $x=(1-\alpha)C\exp(-(1-\alpha)C\nume^{-x})$ and $\gamma^*=(1-\alpha)C\nume^{-\gamma_*}$.\COMMENT{The result of \citet{KS81} (see \cite[Theorem~4]{AFP98}) states that, for $p=C/n$, a.a.s.\ the largest matching of $G(n,p)$ has size 
    \[(1\pm o(1))\left(1-\frac{\theta_*+\theta^*+\theta_*\theta^*}{2C}\right)n,\]
    where $\theta_*$ is the smallest root of the equation $x=C\exp(-C\nume^{-x})$ and $\theta^*=C\nume^{-\theta_*}$.
    Naturally, we are interested in the maximum matching in $G((1-\alpha)n,p)\sim G(n',(1-\alpha)C/n')$ as $n'$ tends to infinity.
    This yields the expression on the left above.
    And we wish this matching to have size at least $n/2-d=(1-2\alpha)n/2=\frac{1-2\alpha}{2(1-\alpha)}n'$.}
\end{conjecture}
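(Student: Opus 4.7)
The plan is to prove the $0$-statement and $1$-statement of the sharp threshold definition separately, establishing the first by direct application of the Karp--Sipser theorem to the extremal example and the second by a stability argument feeding into the same Karp--Sipser asymptotics.

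For the $0$-statement, I would take $H_n$ to be the complete bipartite graph $K_{d,n-d}$ with parts $A$ of size $d = \alpha n$ and $B$ of size $(1-\alpha)n$, as in \cref{sec:proofs}. Since every edge of $H_n$ lies between $A$ and $B$, any perfect matching of $H_n \cup G(n,p)$ must contain at least $(|B|-|A|)/2 = (1-2\alpha)n/2$ edges with both endpoints in $B$; these must come from $G(n,p)[B] \sim G((1-\alpha)n, C/n)$. Setting $n' = (1-\alpha)n$ and $C' = (1-\alpha)C$, the Karp--Sipser theorem, in the precise form of \cite[Theorem~4]{AFP98}, determines the a.a.s.\ size of the largest matching in $G(n', C'/n')$ as $\left(1 - (\gamma_* + \gamma^* + \gamma_*\gamma^*)/(2C') + o(1)\right)n'$. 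Equating this expression with $(1-2\alpha)n/2 = \frac{1-2\alpha}{2(1-\alpha)}n'$ yields precisely the defining equation of $C(\alpha)$; for $p \leq (1-\varepsilon)C/n$ the largest matching in $G(n,p)[B]$ is a.a.s.\ strictly smaller than required, ruling out a perfect matching.

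For the $1$-statement, fix $H_n$ with $\delta(H_n) \geq \alpha n$ and $p = (1+\varepsilon)C/n$. I would attempt a stability dichotomy: either $H_n$ admits a ``bipartite-like'' partition $V = A' \cup B'$ with $|A'| \approx \alpha n$ such that all but $o(n^2)$ edges of $H_n$ lie between $A'$ and $B'$, or else $H_n$ contains $\Omega(n^2)$ edges inside one part of any such partition, which via a quantitative Berge--Gallai--Edmonds analysis forces $\nu(H_n) \geq \alpha n + \Omega(n)$. In the latter case, the matching deficiency is strictly smaller than the extremal $(1-2\alpha)n$, and the residual matching needed is comfortably below the Karp--Sipser threshold. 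In the near-extremal case, one mimics the lower-bound argument in reverse: apply Karp--Sipser to $G(n,p)[B']$ to obtain an a.a.s.\ matching of size exactly $(1-2\alpha)n/2$, and then use the minimum-degree condition on $H_n$---which forces each vertex of $A'$ to have $(1-\alpha-o(1))n$ neighbours in $B'$ and thus $\geq (\alpha - o(1))n$ neighbours in $B' \setminus V(\text{matching})$---to complete to a perfect matching via Hall's theorem.

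The principal obstacle is the near-extremal case of the dichotomy. Graphs with $\nu(H_n) = \alpha n$ and $\delta(H_n) \geq \alpha n$ need not be literally bipartite, and the presence of even a handful of intra-part edges in $H_n$ could combine with random edges to form augmenting structures absent in $K_{d,n-d}$; tracking these contributions with the precision required for the sharp constant $C(\alpha)$ is delicate. Moreover, the iterative random-edge technique used for \cref{2ConnectedGraph,PMatching} gives control only up to constant factors and is too coarse here, since here one genuinely needs a sharp Karp--Sipser asymptotic rather than a mere count of ``good'' non-edges. A likely path forward is a direct differential-equations analysis of the Karp--Sipser leaf-removal process on $H_n \cup G(n,p)$, but this is complicated by the loss of the independence structure of $G(n,p)$ caused by the high-degree backbone $H_n$, and coupling the process against a purely random instance on a suitable ``residual'' vertex set seems the cleanest route around this difficulty.
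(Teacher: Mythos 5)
You should first note that \cref{conj:PM} is stated in the paper as an open conjecture: the paper contains no proof of it, only (in a source comment) the heuristic computation that motivates the value of $C(\alpha)$, namely the reduction on the extremal graph $K_{d,n-d}$ to the size of a maximum matching of $G((1-\alpha)n,C/n)$ via the Karp--Sipser asymptotics of \cite{KS81,AFP98}. Your $0$-statement is exactly this heuristic, and it is essentially sound: the counting argument showing that any perfect matching of $K_{d,n-d}\cup G(n,p)$ uses at least $(1-2\alpha)n/2$ edges inside $B$ is correct, and one only needs to add the (routine but unstated) verification that the Karp--Sipser matching-size formula is continuous and strictly increasing in the edge density near $C(\alpha)$, so that $p\le(1-\eps)C/n$ really does leave the matching in $G(n,p)[B]$ a.a.s.\ short by $\Omega(n)$.

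The $1$-statement, however, is where the conjecture is genuinely open, and your sketch does not close it. Three concrete gaps: (i) the stability dichotomy itself (either $H_n$ is $o(n^2)$-close to $K_{d,n-d}$ or $\nu(H_n)\ge\alpha n+\Omega(n)$) is asserted, not proved, and even granting it, the non-extremal case is not simply additive --- a perfect matching of $H_n\cup G(n,p)$ need not split as a maximum matching of $H_n$ plus a matching of $G(n,p)[B']$, so ``the residual matching needed is comfortably below the threshold'' does not follow directly; (ii) in the near-extremal case, your claim that every vertex of $A'$ has $(1-\alpha-o(1))n$ neighbours in $B'$ does not follow from $\delta(H_n)\ge\alpha n$ together with approximate bipartiteness: a vertex of $A'$ may have only $\alpha n$ neighbours, and an $o(n^2)$ bound on intra-part edges does not prevent individual vertices from having most of their neighbourhood inside $A'$; (iii) consequently the Hall completion can fail, because the Karp--Sipser process gives you no control over \emph{which} $(1-2\alpha)n$ vertices of $B'$ the random matching covers, and for $\alpha\le 1/3$ it may cover the entire $B'$-neighbourhood of some vertex of $A'$. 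Your own closing paragraph acknowledges these obstacles and proposes a differential-equations analysis of the leaf-removal process on $H_n\cup G(n,p)$ as a way around them, but that analysis is not carried out; as it stands, the proposal is a programme rather than a proof, which is consistent with the statement remaining a conjecture in the paper.
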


It also seems plausible that complete bipartite graphs should be the extremal example for Hamiltonicity for the entire range of $d$.
Clearly, the complete bipartite graph $H_n$ defined above contains a cycle of length $2d$, and no longer cycles.
If an edge with both endpoints in $B$ is added to $H_n$, this can be used to construct a longer cycle.
In this case, however, it is not only isolated edges that are useful for constructing longer cycles: indeed, any path (of length at most $n-2d$) contained in~$B$ can be incorporated into a cycle.
Thus, a linear forest (that is, a collection of vertex-disjoint paths) containing $n-2d$ edges within $B$ can be used to construct a Hamilton cycle.

\begin{conjecture}\label{conj:Ham}
    Let $\alpha\in(0,1/2)$ be fixed, and let $d=\alpha n$.
    The sharp $d$-threshold for Hamiltonicity coincides with the sharp threshold for $G(n-d,p)$ to contain a linear forest of size $n-2d$.
\end{conjecture}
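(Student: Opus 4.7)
The plan is to attack both directions of the conjectured sharp threshold separately. For the lower bound, take the standard extremal graph $H_n=K_{d,n-d}$ with parts $A$ and $B$ of size $d$ and $n-d$. In any Hamilton cycle of $H_n\cup G(n,p)$, the edges inside $B$ form a linear forest, and a parity/alternation argument (as in the \ref{0statement2}-argument already written in \cref{sec:proofs}) forces at least $n-2d$ such edges. Since those edges must come from $G(n,p)[B]\sim G(n-d,p)$, if $p$ is below the sharp threshold for $G(n-d,p)$ to contain a linear forest with $n-2d$ edges, then a.a.s.\ no such Hamilton cycle exists. A small amount of care is needed to rule out the use of $A$--$A$ random edges (which could shorten the required forest in $B$), but the expected number of such edges is $\Theta(d^2 p)=\Theta(\alpha^2 Cn)$, and a short Chernoff/union-bound argument confirms that the required forest size decreases by only $o(n)$, which is negligible because the linear forest threshold is sharp.

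For the upper bound I would follow the usual \emph{stability $+$ extremal} dichotomy. Fix a small $\mu>0$ and say that $H_n$ is \emph{$\mu$-extremal} if there is a partition $V(H_n)=A'\cupdot B'$ with $|A'|=d$, $|B'|=n-d$ such that $e(H_n[A'])\le\mu n^2$ and $H_n$ misses at most $\mu n^2$ edges of the bipartite graph between $A'$ and $B'$. The plan splits accordingly:
\begin{enumerate}[label=$(\arabic*)$]
    \item \textbf{Non-extremal case.} If $H_n$ is not $\mu$-extremal, then, by a Chvátal--Erd\H{o}s-type stability result for Hamiltonicity (combined with the minimum degree hypothesis), $H_n$ should already be very close to Hamiltonian and one can adapt the rotation-free argument of \cref{2ConnectedGraph} (using a stability strengthening of \cref{RandomEdges} giving $\Omega(n^2)$ bad non-edges rather than only $n^2/8-O(n\eta)$) to show that $p=\omega(1/n)$ already suffices. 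Since the conjectured sharp threshold is of order $1/n$, this is comfortably below $(1+\eps)C/n$.
    \item \textbf{Extremal case.} If $H_n$ is $\mu$-extremal with witness partition $A',B'$, reveal $G(n,p)$ in two rounds $p=p_1+p_2$ with $p_1=(1+\eps/2)C/n$ and $p_2=\delta/n$ for a tiny $\delta$. In the first round, the subthreshold result for linear forests in $G(n-d,p_1)$ (restricted to $B'$) gives a.a.s.\ a linear forest $L\subseteq G(n,p_1)[B']$ with $n-2d$ edges, hence exactly $d$ path-components whose $2d$ endpoints we call \emph{plugs}. In the second round we use the $\Theta(n)$ additional random edges, together with the near-complete bipartite structure between $A'$ and $B'$, to find a perfect matching in a suitable bipartite auxiliary graph whose one side is $A'$ and whose other side is the set of plug pairs, effectively splicing the $d$ $B'$-paths of $L$ together through the $A'$-vertices into a single Hamilton cycle. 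A standard Hall-type argument, powered by the near-completeness of $H_n[A',B']$ after deleting a $\mu$-fraction of exceptional edges, makes this matching argument go through.
\end{enumerate}

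The main obstacle is the extremal case, specifically the sharp matching-up between the structure of a random linear forest at its threshold and the combinatorial structure of $H_n$ near the extremal configuration. Two delicate points arise: first, one must show that at $p=(1+\eps)C/n$ not only does a linear forest of size $n-2d$ exist, but one exists with \emph{flexible} endpoints, so that Hall's condition for the plug-matching is satisfied even after forbidding endpoints in the small set of vertices that witness the deviation of $H_n$ from bipartite-completeness; second, one must handle the fact that the defining partition $A',B'$ may not be unique and that random edges inside $A'$ could be exploited to further reduce the required $B'$-forest size, so the sharp constant $C$ must be computed against the worst-case partition. The first point should be tractable using a local-resampling / switching argument on $G(n-d,p)$ at the threshold (analogous to techniques used in the sharp threshold for perfect matchings of \citet{ER66}); the second point should be handled by noting that any swap $A'\leftrightarrow B'$ that saves edges inside $B'$ costs at least as many edges across the partition, and hence cannot beat the bound unless $H_n$ is in fact further from extremal, contradicting the case assumption.
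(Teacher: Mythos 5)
You are attempting to prove \cref{conj:Ham}, which the paper states as an open conjecture and does not prove; indeed, the paper explicitly remarks that even the size of the largest linear forest in a sparse random graph has not been studied, so the central object your argument leans on in both directions --- a sharp threshold $C/n$ for $G(n-d,p)$ to contain a linear forest with $n-2d$ edges, together with enough structural control of that forest (number of components, location and ``flexibility'' of endpoints) at $p=(1+\eps)C/n$ --- is precisely what is unknown. As written, your text is a research programme rather than a proof: the two points you yourself flag as ``delicate'' (flexible endpoints for the plug-matching, and the worst-case choice of the partition $A',B'$) are exactly where the difficulty of the conjecture lives, and deferring them to an unspecified ``local-resampling / switching argument'' leaves the main work undone.

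Beyond the admitted gaps, two steps are concretely wrong or insufficient. First, in the non-extremal case you claim it is enough to show Hamiltonicity for $p=\omega(1/n)$ and call this ``comfortably below $(1+\eps)C/n$''; this is backwards, since $\omega(1/n)$ is asymptotically \emph{above} $\Theta(1/n)$, and the \ref{1statement2}-statement must be verified at $p=(1+\eps)C/n$ itself. Even granting a stability strengthening of \cref{RandomEdges} with $\Omega(n^2)$ good non-edges (which you do not prove), the longest cycle guaranteed by the minimum degree has length only about $2\alpha n$, so the iteration of \cref{2ConnectedGraph} needs $\Theta(n)$ rounds and hence $\Theta(n)$ random edges; you would then have to compare the resulting constant with $C(\alpha)$, and the order of magnitude alone does not close this case. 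Second, in the extremal case the minimum degree condition forces each vertex of $A'$ to have only at least \emph{one} neighbour in $B'$ (as $|A'|=d$), so $\mu$-extremality still permits exceptional vertices for which your Hall-type plug-matching can fail outright; absorbing them at the sharp constant is not routine. Finally, a smaller remark on your \ref{0statement2}-direction: the concern about $A$--$A$ random edges is moot, since along any Hamilton cycle the number of $B$--$B$ adjacencies exceeds the number of $A$--$A$ adjacencies by exactly $|B|-|A|=n-2d$, so edges inside $A$ can never reduce the required linear forest in $B$; your proposed fix (a decrease of ``only $o(n)$'' in the required size) is the wrong reasoning, because any genuine linear-size decrease would shift the sharp constant rather than being negligible.
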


In this case, we do not propose an explicit expression for the threshold since the size of the largest linear forest in a sparse random graph has not been considered in the literature.
Studying this problem may be of independent interest.
We expect the sharp $d$-threshold for Hamiltonicity to differ from that for perfect matchings by a constant factor, with this constant factor depending on $\alpha$ and tending to $1$ as $\alpha$ tends to $0$.

We also believe \cref{conj:Ham} should hold for pancyclicity.
Moreover, we have no reason to believe that the lower bound on $d$ in \cref{coro:pan} is necessary, and think that a statement analogous to \cref{thm:sharp} should hold for pancyclicity as well.
Naturally, we also believe that determining the sharp $d$-threshold for perfect matchings, Hamiltonicity and pancyclicity in the critical window $d=o(n)$ is a problem of interest.

Lastly, we want to consider the extension of our results to randomly perturbed \emph{directed} graphs (or \emph{digraphs} for short), where we allow up to two edges between each pair of vertices, one in each direction.
We define thresholds analogously as above, where now the binomial random digraph $D(n,p)$ is obtained by adding each of the possible $n(n-1)$ edges independently at random with probability $p$, and instead of the minimum degree of a graph we consider the minimum \emph{semidegree} of the digraph, which is the minimum, over all vertices, of the minimum between the number of edges leaving and the number of edges arriving at each vertex.
A classical result of \citet{GH60} shows that, if $\delta^0(D)=d\geq n/2$, then the (sharp) \mbox{$d$-threshold} for Hamiltonicity is $0$.
The sharp $0$-threshold is again $\log n/n$ (as follows by a general coupling argument of \citet{Mc83}).
The Hamiltonicity of randomly perturbed digraphs when $d=\alpha n$ with $\alpha\in(0,1/2)$ fixed was studied by \citet{BFM03}, who showed that the $d$-threshold in this case is also $1/n$ (later, \citet{KKS16} provided a new proof of this fact).
Just like in graphs, the thresholds present two critical windows around $\alpha=0$ and $\alpha=1/2$, and it is thus natural to consider the $d$-thresholds in these regimes.
The extension of \cref{thm:thres} to digraphs remains open.

\begin{conjecture}\label{conj:digraph}
    Let $d=n/2-\eta$, where $1/2\leq\eta=\eta(n)=o(n)$.
    The $d$-threshold for Hamiltonicity in randomly perturbed directed graphs is $\eta/n^2$.
\end{conjecture}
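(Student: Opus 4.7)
The plan is to adapt the strategy of the proof of \cref{2ConnectedGraph}~\ref{2ConnectedGraphitem1} to directed graphs. The lower bound (\ref{0statement1}-statement) should follow from essentially the same construction: take a \emph{complete bipartite digraph} $H_n$ on parts $A, B$ of sizes $d$ and $n-d$, consisting of all directed edges in both directions between $A$ and $B$, so that $\delta^0(H_n) = d$. A directed Hamilton cycle on $A \cup B$, viewed as a cyclic word of labels $A/B$, must contain at least $|B|-|A|=2\eta$ pairs of consecutive $B$-labels; each such pair requires a directed edge inside $B$. The expected number of directed edges of $D(n,p)$ inside $B$ is $\Theta(n^2 p)$, so Markov gives the lower bound for $p = o(\eta/n^2)$.

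For the upper bound (\ref{1statement1}-statement), I would mirror the three-step scheme of the paper. First, replace the notion of $2$-connectivity by \emph{strong $2$-connectivity} (or simply strong connectivity plus a few extra edges) and apply a Bollob\'as--Frieze--Krivelevich--McDiarmid-style result for digraphs to show that adding $\omega(1)$ random directed edges a.a.s.\ makes $H_n$ strongly $2$-connected. Second, establish a \emph{directed} analogue of \cref{DiracLemma}: any strongly connected digraph with semidegree $n/2 - \eta$ contains a directed cycle of length at least $n - O(\eta)$; such extensions of the Ghouila--Houri theorem to ``almost spanning'' directed cycles are known in the literature and should suffice. Third, prove a directed analogue of \cref{RandomEdges}: if such a digraph is not Hamiltonian and $\mathcal{C}$ is a longest directed cycle, then $\Omega(n^2)$ directed non-edges, when added, create a longer directed cycle. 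Given these three ingredients, the same iterative sprinkling argument (sample edges until a ``good'' edge lands, which happens in expected $O(1)$ steps since the good set has density $\Omega(1)$, and repeat at most $O(\eta)$ times) yields a Hamilton cycle after $O(\eta)$ random edges.

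The main obstacle will be the directed version of \cref{RandomEdges}. In the undirected case, one exploits the fact that, for $v \notin V(\mathcal{C})$ and $w \in N_\mathcal{C}(v)$, neither $w^-$ nor $w^+$ lies in $N_\mathcal{C}(v)$, which then forces $|W_v| \ge n/2 - O(\eta)$ and yields $\Omega(n^2)$ good non-edges. In the directed setting one must separate in- and out-neighbourhoods: if $v \to w$ and $w^- \to v$ are both edges, then rerouting $w^- \to w$ through $v$ lengthens the cycle, so $w \in N^+_\mathcal{C}(v)$ forces $w^- \notin N^-_\mathcal{C}(v)$. Using the semidegree hypothesis on \emph{both} $N^+_\mathcal{C}(v)$ and $N^-_\mathcal{C}(v)$ and combining with the analogous step giving a set of positions $w$ with controlled local structure, one should again obtain a lower bound of $(1/8 - o(1))n^2$ on the number of directed non-edges whose addition produces a longer directed cycle; the bookkeeping is more delicate than in the undirected case because we must track edge orientations along the cycle and avoid double counting when a potential augmenting edge arises from distinct choices of $v$ or $w$.

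A secondary concern is the availability of the required connectivity-boosting lemma: while \cref{BKFM} has a fairly direct digraph analogue (strong connectivity of $H_n \cup D(n,p)$ with $\omega(1)$ random edges follows from the analysis of isolated ``sources/sinks''), the stronger notion needed to invoke an almost-spanning Ghouila--Houri-type theorem may require a short separate argument. If these steps can be carried out, the conclusion $p = \omega(\eta/n^2)$ suffices, matching the conjectured threshold; obtaining the \emph{sharp} directed analogue of \cref{thm:sharp} would in addition require tracking the constant $1/4 - o(1)$ for the density of good non-edges, which is exactly where the directed bookkeeping above would need to be tight.
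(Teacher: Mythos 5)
This statement is \cref{conj:digraph}, which the paper explicitly leaves \emph{open} (``The extension of \cref{thm:thres} to digraphs remains open''), so there is no proof in the paper to compare against; what you have written is a plan, and its two central ingredients are exactly the points where the plan breaks down. First, the directed analogue of \cref{DiracLemma} that you want to cite is not available off the shelf: as you state it (``any strongly connected digraph with semidegree $n/2-\eta$ contains a directed cycle of length at least $n-O(\eta)$'') it is false --- take two complete digraphs sharing a single vertex, which is strongly connected with minimum semidegree about $n/2-1$ but has circumference about $n/2$ --- and even under $2$-strong-connectivity the Dirac-type circumference bound $\min(2d,n)$ for digraphs with minimum semidegree $d$ is, to the best of my knowledge, an open problem rather than ``known in the literature''. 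Any proof of the conjecture along these lines must first supply such an almost-spanning-cycle statement (possibly only in the dense regime $d=n/2-o(n)$, with an extra connectivity-boosting step replacing \cref{BKFM}), and you give no argument for it.

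Second, the directed analogue of \cref{RandomEdges} does not follow from the bookkeeping you sketch. The undirected proof hinges on the fact that each neighbour $w\in N_{\cC}(v)$ excludes \emph{both} $w^-$ and $w^+$ from $N_{\cC}(v)$, which forces at least $n/2-9\eta$ gaps of length exactly two and hence $\Theta(n)$ ``removable'' vertices $w$, each contributing $\Theta(n)$ good non-edges. In the directed setting the only analogous obstruction is that no $x$ with $x\to v$ can satisfy $v\to x^+$, and a vertex $w$ is removable only if $w^-\to v$ \emph{and} $v\to w^+$. The semidegree condition plus the obstruction do not force many such $w$: if the in-neighbours of $v$ on $\cC$ form one arc of the cycle and the out-neighbours form (essentially) the complementary arc shifted by one, then both neighbourhoods have size about $n/2$, no direct insertion of $v$ is possible, and yet only $O(1)$ vertices are removable, so your intersection-style count yields nothing and the claimed $(1/8-o(1))n^2$ bound does not follow. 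One then only gets $\Theta(n)$ good non-edges per step from the first-level obstruction, which after $\Theta(\eta)$ rounds gives $m=\Theta(\eta n)$ rather than $\Theta(\eta)$, i.e.\ $p$ larger than the conjectured threshold by a factor of order $n$. Closing this gap --- finding a directed substitute for the double-rotation count, or a different argument altogether --- is precisely the difficulty that makes the digraph case a conjecture.
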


Very recently, \citet{ABKPT24} considered different orientations of Hamilton cycles in randomly perturbed digraphs.
It would also be interesting to extend their work to the corresponding critical windows. 

\bibliographystyle{mystyle} 
\bibliography{references}

\begin{thebibliography}{41}
\newcommand{\enquotenew}[1]{`#1'}
\providecommand{\natexlab}[1]{#1}
\providecommand{\url}[1]{\texttt{#1}}
\providecommand{\urlprefix}{URL }
\providecommand{\doi}[1]{\textsc{doi}: \href{https://doi.org/#1}{\nolinkurl{#1}}}
\providecommand*{\eprint}[2][]{arXiv: \href{https://arxiv.org/abs/#2}{\nolinkurl{#2}}}

\bibitem[{Aigner-Horev and Hefetz(2021)}]{AH21}
E.~Aigner-Horev and D.~Hefetz, {Rainbow {Hamilton} cycles in randomly colored randomly perturbed dense graphs}. \emph{SIAM J. Discrete Math.} 35.3 (2021),  1569--1577, \doi{10.1137/20M1332992}.

\bibitem[{Allin and Espuny~D\'iaz(2022)}]{AE22}
A.~Allin and A.~Espuny~D\'iaz, {An analogue of Chvátal's Hamiltonicity theorem for randomly perturbed graphs}. L.~F. Tabera~Alonso (ed.), \emph{Discrete Mathematics Days 2022}, Editorial Universidad de Cantabria, Santander (2022)  31--36, \doi{10.22429/Euc2022.016}.

\bibitem[{Anastos and Frieze(2019)}]{AF19}
M.~Anastos and A.~Frieze, {How many randomly colored edges make a randomly colored dense graph rainbow {Hamiltonian} or rainbow connected?} \emph{J. Graph Theory} 92.4 (2019),  405--414, \doi{10.1002/jgt.22461}.

\bibitem[{{Antoniuk}, {Dudek}, {Reiher}, {Ruci{\'n}ski} and {Schacht}(2021)}]{ADRRS21}
S.~{Antoniuk}, A.~{Dudek}, C.~{Reiher}, A.~{Ruci{\'n}ski} and M.~{Schacht}, {High powers of Hamiltonian cycles in randomly augmented graphs}. \emph{Journal of Graph Theory} 98.2 (2021),  255--284, \doi{10.1002/jgt.22691}.

\bibitem[{Antoniuk, Dudek and Ruci{\'n}ski(2023)}]{ADR23}
S.~Antoniuk, A.~Dudek and A.~Ruci{\'n}ski, {Powers of {Hamiltonian} cycles in randomly augmented {Dirac} graphs -- the complete collection}. \emph{J. Graph Theory} 104.4 (2023),  811--835, \doi{10.1002/jgt.23001}.

\bibitem[{Antoniuk, Kam\v{c}ev and Reiher(2024)}]{AKR24}
S.~Antoniuk, N.~Kam\v{c}ev and C.~Reiher, {{Clique factors in randomly perturbed graphs: the transition points}}. \emph{arXiv e-prints}  (2024). \eprint{2410.11003}.

\bibitem[{Araujo, Balogh, Krueger, Piga and Treglown(2024)}]{ABKPT24}
I.~Araujo, J.~Balogh, R.~A. Krueger, S.~Piga and A.~Treglown, {On oriented cycles in randomly perturbed digraphs}. \emph{Comb. Probab. Comput.} 33.2 (2024),  157–178, \doi{10.1017/S0963548323000391}.

\bibitem[{Aronson, Frieze and Pittel(1998)}]{AFP98}
J.~Aronson, A.~Frieze and B.~G. Pittel, {Maximum matchings in sparse random graphs: {Karp}-{Sipser} revisited}. \emph{Random Struct. Algorithms} 12.2 (1998),  111--177, \doi{10.1002/(SICI)1098-2418(199803)12:2\%3C111::AID-RSA1\%3E3.0.CO;2-\%23}.

\bibitem[{Balogh, Treglown and Wagner(2019)}]{BTW19}
J.~Balogh, A.~Treglown and A.~Z. Wagner, {Tilings in randomly perturbed dense graphs}. \emph{Combin. Probab. Comput.} 28 (2019),  159--176, \doi{10.1017/S0963548318000366}.

\bibitem[{Bohman, Frieze, Krivelevich and Martin(2004)}]{BFKM04}
T.~Bohman, A.~Frieze, M.~Krivelevich and R.~Martin, {Adding random edges to dense graphs}. \emph{Random Struct. Algorithms} 24.2 (2004),  105--117, \doi{10.1002/rsa.10112}.

\bibitem[{{Bohman}, {Frieze} and {Martin}(2003)}]{BFM03}
T.~{Bohman}, A.~{Frieze} and R.~{Martin}, {{How many random edges make a dense graph Hamiltonian?}} \emph{{Random Struct. Algorithms}} 22.1 (2003),  33--42, \doi{10.1002/rsa.10070}.

\bibitem[{B\"{o}ttcher, Han, Kohayakawa, Montgomery, Parczyk and Person(2019)}]{BHKMPP19}
J.~B\"{o}ttcher, J.~Han, Y.~Kohayakawa, R.~Montgomery, O.~Parczyk and Y.~Person, {Universality for bounded degree spanning trees in randomly perturbed graphs}. \emph{Random Structures Algorithms} 55 (2019),  854--864, \doi{10.1002/rsa.20850}.

\bibitem[{B\"{o}ttcher, Montgomery, Parczyk and Person(2020)}]{BMPP20}
J.~B\"{o}ttcher, R.~Montgomery, O.~Parczyk and Y.~Person, {Embedding spanning bounded degree graphs in randomly perturbed graphs}. \emph{Mathematika} 66 (2020),  422--447, \doi{10.1112/mtk.12005}.

\bibitem[{{B{\"o}ttcher}, {Parczyk}, {Sgueglia} and {Skokan}(2021)}]{BPSS21}
J.~{B{\"o}ttcher}, O.~{Parczyk}, A.~{Sgueglia} and J.~{Skokan}, {{Cycle factors in randomly perturbed graphs}}. \emph{Procedia Comput. Sci.} 195 (2021),  404--411, \doi{10.1016/j.procs.2021.11.049}. Proceedings of the XI Latin and American Algorithms, Graphs and Optimization Symposium.

\bibitem[{{B{\"o}ttcher}, {Parczyk}, {Sgueglia} and {Skokan}(2023)}]{BPSS23}
---{}---{}---, {Triangles in randomly perturbed graphs}. \emph{Comb. Probab. Comput.} 32.1 (2023),  91--121, \doi{10.1017/S0963548322000153}.

\bibitem[{{B{\"o}ttcher}, {Parczyk}, {Sgueglia} and {Skokan}(2024)}]{BPSS24}
---{}---{}---, {{The square of a Hamilton cycle in randomly perturbed graphs}}. \emph{Random Struct. Algorithms} 65.2 (2024),  342--386, \doi{10.1002/rsa.21215}.

\bibitem[{Clemens, Hamann, Mogge and Parczyk(2021)}]{CHMP21a}
D.~Clemens, F.~Hamann, Y.~Mogge and O.~Parczyk, {Maker-{Breaker} games on randomly perturbed graphs}. \emph{SIAM J. Discrete Math.} 35.4 (2021),  2723--2748, \doi{10.1137/20M1385044}.

\bibitem[{{Clemens}, {Hamann}, {Mogge} and {Parczyk}(2021)}]{CHMP21b}
D.~{Clemens}, F.~{Hamann}, Y.~{Mogge} and O.~{Parczyk}, {Waiter-Client Games on Randomly Perturbed Graphs}. J.~Ne\v{s}et\v{r}il, G.~Perarnau, O.~Serra and J.~Rue (eds.), \emph{Extended Abstracts EuroComb 2021}, \emph{Trends in Mathematics}, vol.~14,  397--403, Birkh\"auser, Cham (2021), \doi{10.1007/978-3-030-83823-2_62}.

\bibitem[{{Condon}, {Espuny D\'iaz}, {Gir{\~a}o}, {K{\"u}hn} and {Osthus}(to appear)}]{CEGKO20}
P.~{Condon}, A.~{Espuny D\'iaz}, A.~{Gir{\~a}o}, D.~{K{\"u}hn} and D.~{Osthus}, {Hamiltonicity of random subgraphs of the hypercube}. \emph{Mem. Am. Math. Soc.}  (to appear). \eprint{2007.02891}.

\bibitem[{Dirac(1952)}]{Dirac52}
G.~A. Dirac, {Some theorems on abstract graphs}. \emph{Proc. London Math. Soc. (3)} 2 (1952),  69--81, \doi{10.1112/plms/s3-2.1.69}.

\bibitem[{Dudek, Reiher, Ruci\'{n}ski and Schacht(2020)}]{DRRS20}
A.~Dudek, C.~Reiher, A.~Ruci\'{n}ski and M.~Schacht, {Powers of {H}amiltonian cycles in randomly augmented graphs}. \emph{Random Structures Algorithms} 56 (2020),  122--141, \doi{10.1002/rsa.20870}.

\bibitem[{Erd{\H{o}}s and R{\'e}nyi(1966)}]{ER66}
P.~Erd{\H{o}}s and A.~R{\'e}nyi, {On the existence of a factor of degree one of a connected random graph}. \emph{Acta Math. Acad. Sci. Hung.} 17 (1966),  359--368, \doi{10.1007/BF01894879}.

\bibitem[{Espuny~D{\'{\i}}az(2023)}]{E23}
A.~Espuny~D{\'{\i}}az, {Hamiltonicity of graphs perturbed by a random geometric graph}. \emph{J. Graph Theory} 103.1 (2023),  12--22, \doi{10.1002/jgt.22901}.

\bibitem[{Espuny~D{\'{\i}}az and Gir{\~a}o(2023)}]{EG23}
A.~Espuny~D{\'{\i}}az and A.~Gir{\~a}o, {Hamiltonicity of graphs perturbed by a random regular graph}. \emph{Random Struct. Algorithms} 62.4 (2023),  857--886, \doi{10.1002/rsa.21122}.

\bibitem[{{Espuny Díaz} and Hyde(2024)}]{EH24}
A.~{Espuny Díaz} and J.~Hyde, {Powers of Hamilton cycles in dense graphs perturbed by a random geometric graph}. \emph{Eur. J. Comb.} 121 (2024),  paper no.\ 103\,848, \doi{10.1016/j.ejc.2023.103848}.

\bibitem[{Ghouila-Houri(1960)}]{GH60}
A.~Ghouila-Houri, {Une condition suffisante d'existence d'un circuit hamiltonien}. \emph{C. R. Acad. Sci., Paris} 251 (1960),  495--497.

\bibitem[{Hahn-Klimroth, Maesaka, Mogge, Mohr and Parczyk(2021)}]{HMMMS21}
M.~Hahn-Klimroth, G.~S. Maesaka, Y.~Mogge, S.~Mohr and O.~Parczyk, {Random perturbation of sparse graphs}. \emph{Electron. J. Comb.} 28.2 (2021),  research paper p2.26, 12, \doi{10.37236/9510}.

\bibitem[{Han, Morris and Treglown(2021)}]{HMT21}
J.~Han, P.~Morris and A.~Treglown, {Tilings in randomly perturbed graphs: bridging the gap between {Hajnal}-{Szemer{\'e}di} and {Johansson}-{Kahn}-{Vu}}. \emph{Random Struct. Algorithms} 58.3 (2021),  480--516, \doi{10.1002/rsa.20981}.

\bibitem[{Han and Zhao(2020)}]{HZ20}
J.~Han and Y.~Zhao, {Hamiltonicity in randomly perturbed hypergraphs}. \emph{J. Combin. Theory Ser. B} 144 (2020),  14--31, \doi{10.1016/j.jctb.2019.12.005}.

\bibitem[{Joos and Kim(2020)}]{JK19}
F.~Joos and J.~Kim, {Spanning trees in randomly perturbed graphs}. \emph{Random Structures Algorithms} 56 (2020),  169--219, \doi{10.1002/rsa.20886}.

\bibitem[{Karp and Sipser(1981)}]{KS81}
R.~M. Karp and M.~Sipser, {Maximum matchings in sparse random graphs}. \emph{22nd Annual Symposium on Foundations of Computer Science (sfcs 1981)} (1981)  364--375, \doi{10.1109/SFCS.1981.21}.

\bibitem[{{Katsamaktsis}, {Letzter} and {Sgueglia}(2024)}]{KLS23}
K.~{Katsamaktsis}, S.~{Letzter} and A.~{Sgueglia}, {{Rainbow Hamiltonicity in uniformly coloured perturbed digraphs}}. \emph{Comb. Probab. Comput.} 33.5 (2024),  624--642, \doi{10.1017/S0963548324000130}.

\bibitem[{Kor\v{s}unov(1977)}]{Kor77}
A.~D. Kor\v{s}unov, {Solution of a problem of {P}. {E}rd\H{o}s and {A}. {R}\'{e}nyi on {H}amiltonian cycles in undirected graphs}. \emph{Metody Diskretn. Anal.} 31 (1977),  17--56.

\bibitem[{Krivelevich, Kwan and Sudakov(2016)}]{KKS16}
M.~Krivelevich, M.~Kwan and B.~Sudakov, {Cycles and matchings in randomly perturbed digraphs and hypergraphs}. \emph{Comb. Probab. Comput.} 25.6 (2016),  909--927, \doi{10.1017/S0963548316000079}.

\bibitem[{Krivelevich, Kwan and Sudakov(2017)}]{KKS17}
---{}---{}---, {Bounded-degree spanning trees in randomly perturbed graphs}. \emph{SIAM J. Discrete Math.} 31 (2017),  155--171, \doi{10.1137/15M1032910}.

\bibitem[{McDiarmid(1983)}]{Mc83}
C.~McDiarmid, {General first-passage percolation}. \emph{Adv. Appl. Probab.} 15 (1983),  149--161, \doi{10.2307/1426987}.

\bibitem[{McDowell and Mycroft(2018)}]{MM18}
A.~McDowell and R.~Mycroft, {Hamilton {$\ell$}-cycles in randomly perturbed hypergraphs}. \emph{Electron. J. Combin.} 25 (2018),  Paper No. 4.36, 30, \doi{10.37236/7671}.

\bibitem[{Nenadov and Pham(2024)}]{NP24}
R.~Nenadov and H.~T. Pham, {{Spread blow-up lemma with an application to perturbed random graphs}}. \emph{arXiv e-prints}  (2024). \eprint{2410.06132}.

\bibitem[{{Nenadov} and {Truji\'c}(2021)}]{NT21}
R.~{Nenadov} and M.~{Truji\'c}, {{Sprinkling a few random edges doubles the power}}. \emph{{SIAM J. Discrete Math.}} 35.2 (2021),  988--1004, \doi{10.1137/19M125412X}.

\bibitem[{Parczyk(2020)}]{Parc20}
O.~Parczyk, {2-universality in randomly perturbed graphs}. \emph{European J. Combin.} 87 (2020),  103--118, \doi{10.1016/j.ejc.2020.103118}.

\bibitem[{P\'osa(1976)}]{Posa76}
L.~P\'osa, {Hamiltonian circuits in random graphs}. \emph{Discrete Math.} 14 (1976),  359--364, \doi{10.1016/0012-365X(76)90068-6}.

\end{thebibliography}
 

\end{document}